\newtheorem{theorem}{Theorem}
\newtheorem{lemma}[theorem]{Lemma}
\newtheorem{proposition}[theorem]{Proposition}
\newtheorem{corollary}[theorem]{Corollary}
{\theorembodyfont{\rmfamily}%
  \newtheorem{example}[theorem]{Example}
   }
\newenvironment{proof}{\noindent\textit{Proof.}}
{\QED\vskip\theorempostskipamount} 
\def\petitcarre{\vrule height4pt width 4pt depth0pt}
\def\QED{\relax\ifmmode\eqno{\hbox{\petitcarre}}\else{%
  \unskip\nobreak\hfil\penalty50\hskip2em\hbox{}\nobreak\hfil
  \petitcarre
  \parfillskip=0pt \finalhyphendemerits=0\par\smallskip}
  \fi}
\DeclareMathOperator{\Card}{Card}
\DeclareMathOperator{\Index}{index}
\def\u(#1){\underline{#1\!}\,}
\newcommand{\N}{\mathbb{N}}
\renewcommand{\P}{\mathcal P}
\title{Hall sets, Lazard sets and comma-free codes}
\author{Dominique Perrin, Christophe Reutenauer}
\begin{document}
\maketitle
\begin{abstract}
We investigate the relationship between two constructions
of maximal comma-free codes described respectively by Eastman
and by Scholtz and the notions of Hall sets and Lazard sets
introduced in connection with factorizations of free monoids
and bases of free Lie algebras.
\end{abstract}
\tableofcontents
\section{Introduction}
The notion of comma-free code has been introduced by Golomb, Gordon
and Welsh~\cite{GolombGordonWelch1958} after the mention
of their possible role in molecular genetics~\cite{CrickGriffithOrgel1957}. These codes are defined
by a property of nonoverlap which makes the decoding very
simple. The definition implies that the
words of a comma-free code are primitive and that
it cannot contain two distinct conjugate words.
It was conjectured in~\cite{GolombGordonWelch1958}
 that for every odd integer $n$
there exists a comma-free code which is a system of
representatives of the conjugacy classes of words of length $n$.

This conjecture was proved by Eastman~\cite{Eastman1965} and later
Scholtz \cite{Scholtz1969} gave a different construction.
In \cite{BerstelPerrinReutenauer2010} and in \cite{Reutenauer1993}, the construction
of Scholtz was described using concepts related with factorizations
of free monoids and bases of free Lie algebras, namely Hall sets
and Lazard sets. These concepts  introduced initially
by Sch\"utzenberger form a remarkable interaction
of notions from classical algebra, such as free Lie algebras
and notions from information theory such as comma-free codes.
They were studied extensively by  Viennot~\cite{Viennot1978}
who introduced the terms of Lazard and Hall sets.

Recently Knuth \cite{Knuth2015} has incorporated the problem of 
constructing comma-free codes as an example involving techniques important
for backtrack programming. He gives in particular a simplified description
of Eastman's construction in Exercise 32.

The aim of this article is to show that Eastman construction
is also related to Hall and Lazard sets. We prove that there
exist a Lazard set of of words such that for any odd integer $n$,
its words of length $n$ form the comma-free codes obtained
by Eastman's construction (Theorem~\ref{theoremMain}).

Eastman's construction has an advantage over Schotz construction: 
it gives
directly for a comma-free code $X$
constructed by his method a polynomial time algorithm to find the conjugate
of a given primitive word which belongs to $X$. This allows
to perform the decoding in polunomial time. We will show
here that this is also the case for Scholtz construction using
an algorithm due to Melan\c{c}on to find the conjugate
of a primitive word which belongs to a given Hall set.

The paper is organized as follows. In Section~\ref{sectionHall}
we recall the definition and basic properties of Hall sets of trees
and words and of Lazard sets of words.

In Section~\ref{sectionCodes}, we recall some definitions and properties
of codes. We define circular codes and the subfamily of comma-free codes.

In Section~\ref{sectionDips}, we give an account of the notions
of dips and superdips introduced by Knuth~\cite{Knuth2015} to
describe Eastmn's algorithm.

In Section~\ref{sectionEastman}, we describe Eastman's algorithm in
terms of the notions introduced in the preceding section
and prove that it gives a maximal comma-free code of length $n$
 for each odd integer $n$ (Proposition~\ref{propositionEastman}).

In Section~\ref{sectionNewLazard} we show the 
existence of a Lazard set $Z$ such that for every odd integer $n$
the set $Z\cap A^n$ is the result of Eastman's construction.

In the last section, we describe the algorithm of Melan\c{c}on
(see~\cite{Reutenauer1993})
which allows to find the conjugate of a primitive word
which belongs to a Lazard set $Z$. It can be applied to
find the conjugate of a primitive word which belongs to
a comma-free code of the form $Z\cap A^n$. In particular
it gives such an algorithm for the code obtained by Scholtz
construction.

\section{Hall sets and Lazard sets}\label{sectionHall}
We begin by recalling the notions of Hall and Lazard sets. These
sets were introduced in connexion with the description of bases
of free Lie algebras (see~\cite{Reutenauer1993} for the historical background).
\subsection{Hall sets}
The free magma $M(A)$ on the alphabet $A$ is the set of all terms containing the letters and closed under the binary operation $x,y\mapsto (x,y)$.
It can be identified with the set of complete binary trees with
leaves labeled by $A$.

A totally ordered subset $H$ of $M(A)$ containing $A$ 
is called a \emph{Hall set} (of trees) if for any $x,y\in H$, one has
$(x,y)\in H$ if and only if $x<y$ and either $y\in A$
or $y=(z,t)$ with $z\le x$. Moreover, in this case
$x<(x,y)$. An element of $H$ is called a \emph{Hall tree}.

We warn the reader that we follow here the notation of Viennot in~\cite{Viennot1978}.
The notation used  in~\cite{Reutenauer1993}, following
Lothaire~\cite{Lothaire1997}, is symmetrical
and a Hall set $H$ in~\cite{Reutenauer1993} is such that for any 
$x,y\in H$, one has $(x,y)\in H$ if and only $x<y$ and either
$x\in A$ or $x=(v,w)$ with $y\le w$. Moreover, in this case
$(x,y)<y$. To recover the above definition, one has to reverse
the order and to take the mirror image.

The \emph{foliage} of an element $z$ of $M(A)$ is the word $f(z)\in A^*$
defined by $f(a)=a$ if $a\in A$ and $f(x,y)=f(x)f(y)$. Thus the foliage
of $z$ is obtained by erasing the parentheses when $z$ is viewed
as a term and by following the frontier of the tree if $z$ is
viewed as a binary tree.

A \emph{Hall set of words} is the foliage of a Hall set tree. Its elements
are called \emph{Hall words}

Fix a Hall set. By~\cite[Corollary 4.5]{Reutenauer1993}, each Hall word is the foliage
of a unique Hall tree.

Two words $x,y$ are \emph{conjugate} if they have the form $x=uv$ and $y=vu$.
Since a conjugate of a word is just cyclic shift,
conjugacy is an equivalence on words.
A word $x$ is \emph{primitive} if it is not a power of another
word. The conjugacy class of a primitive word is made of primitive words
and a primitive word has $|x|$ distinct conjugates (see~\cite{Lothaire1997}
for a more detailed presentation of these properties).

A \emph{Lyndon word} is a primitive word which is minimal in its
conjucacy class.

\begin{example}
The set of Lyndon words on $A$ is a Hall set. Indeed, for each Lyndon
word $x$ which is not a letter, its \emph{standard factorization} is the pair
$(y,z)$ such that $x=yz$ where $y$ is the longest proper prefix of $x$
which is a Lyndon word. We then associate to any Lyndon word $x$ a tree
$\pi(x)$ by $\pi(a)=a$ if $a\in A$ and $\pi(x)=(\pi(y),\pi(z))$ if
$(y,z)$ is the standard factorization of $x$. One may then verify
the condition defining a Hall set (see~\cite{Viennot1978} p. 15
or~\cite[Exercise 8.1.4]{BerstelPerrinReutenauer2010}). The set of Lyndon words of length  $4$ on $A=\{a,b\}$
is $\{aaab,aabb,abbb\}$. The corresponding trees are represented in Figure~\ref{figureLyndon4}.
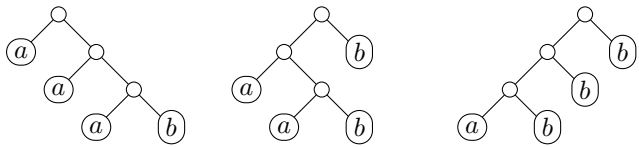
\begin{figure}[hbt]
\centering
\gasset{Nadjust=wh,AHnb=0}
\begin{picture}(80,20)
\put(0,0){
\begin{picture}(20,20)
\node(1)(5,20){}
\node(l)(0,15){$a$}\node(r)(10,15){}
\node(rl)(5,10){$a$}\node(rr)(15,10){}
\node(rrl)(10,5){$a$}\node(rrr)(20,5){$b$}

\drawedge(1,l){}\drawedge(1,r){}
\drawedge(r,rl){}\drawedge(r,rr){}
\drawedge(rr,rrl){}\drawedge(rr,rrr){}
\end{picture}
}
\put(30,0){
\begin{picture}(10,20)
\node(1)(10,20){}
\node(l)(5,15){}\node(r)(15,15){$b$}
\node(ll)(0,10){$a$}\node(lr)(10,10){}
\node(lrl)(5,5){$a$}\node(lrr)(15,5){$b$}

\drawedge(1,l){}\drawedge(1,r){}
\drawedge(l,ll){}\drawedge(l,lr){}
\drawedge(lr,lrl){}\drawedge(lr,lrr){}
\end{picture}
}
\put(60,0){
\begin{picture}(10,20)
\node(1)(15,20){}
\node(l)(10,15){}\node(r)(20,15){$b$}
\node(ll)(5,10){}\node(lr)(15,10){$b$}
\node(lll)(0,5){$a$}\node(llr)(10,5){$b$}

\drawedge(1,l){}\drawedge(1,r){}
\drawedge(l,ll){}\drawedge(l,lr){}
\drawedge(ll,lll){}\drawedge(ll,llr){}
\end{picture}
}
\end{picture}
\caption{The $3$ Lyndon trees of degree $4$}\label{figureLyndon4}
\end{figure}
\end{example}
\subsection{Lazard sets}
We denote $A^{\le n}=\varepsilon\cup A\cup \ldots\cup A^n$ the
set of words of length at most $n$.

A totally ordered set $Z$ of words on the alphabet $A$
is called a \emph{Lazard set} if the following holds. For any
integer $n\ge 1$,  denote the set $Z\cap A^{\le n}=\{z_1,z_2,\ldots,z_k\}$
with 
\begin{equation}
z_1<z_2<\ldots<z_k.\label{equationz_i}
\end{equation} For $1\le i\le k$, let $Z_i$
be the sequence of sets defined by 
$Z_1=A$ and for $1\le i\le k$,  
\begin{equation}
Z_{i+1}=z_i^*(Z_i\setminus z_i).\label{equationZ_i}
\end{equation}
Then $z_i\in Z_i$ for $1\le i\le k$
and 
\begin{equation}
Z_{k+1}\cap A^{\le n}=\emptyset.\label{equationZ_{k+1}}
\end{equation}

The same remark on the choice of right or left made for Hall sets
holds for Lazard sets. We follow here the choice of Viennot~\cite{Viennot1978}
and of~\cite{BerstelPerrinReutenauer2010}. The choice made by the second author
in \cite{Reutenauer1993} is symmetrical. The sets $Z_i$ are defined
by $Z_{i+1}=(Z_i\setminus z_i)z_i^*$.

By a result due to Viennot~\cite{Viennot1978}, Hall sets of words
and Lazard sets
coincide (see~\cite[Therem 4.18]{Reutenauer1993}).

The Hall set of trees corresponding to a given Lazard set $Z$ is
obtained via the mapping $\pi: Z\rightarrow M(A)$ defined as follows.
First $\pi(a)=a$ for any letter $a\in A$. Next, let $n\ge 1$,
let $Z\cap A^{\le n}=\{z_1,z_2,\ldots,z_k\}$ with $z_1<z_2<\ldots<z_k$
and let $Z_1,\ldots,Z_n$ be the sequence of prefix codes defined
by~\eqref{equationZ_i}. Let $z\in Z\cap A^{\le n}$. If $z\in A$,
we set $\pi(z)=z$. Otherwise, let $i$ be the 
least index such that $z\in Z_{i+1}$. One
has then $z=z_iy$ with $y\in Z$ and we set
 $\pi(z)=(\pi(z_i),\pi(y))$.

\begin{example}
Let us verify that the set $L$ of Lyndon words satisfies the condition
of the definition of Lazard sets with $n=5$. One has
$L\cap A^{[5]}=\{a,aaaab,aaab,aaabb,aab, aabab,aabb,aabbb,ab,ababb,abb,abbb,abbbb,b\}$. The corresponding sequence of prefix codes is
\begin{eqnarray*}
Z_1&=&\{a,b\},\\
Z_2\cap A^{\le 5}&=&\{aaaab,aaab,aab,ab,b\},\\
Z_3\cap A^{\le 5}&=&\{aaab,aab,ab,b\},\\
Z_4\cap A^{\le 5}&=&\{aaabb, aab,ab,b\},\\
Z_5\cap A^{\le 5}&=&\{aab,ab,b\},\\
Z_6\cap A^{\le 5}&=&\{aabab,aabb,ab,b\},\\
Z_7\cap A^{\le 5}&=&\{aabb,ab,b\},\\
Z_8\cap A^{\le 5}&=&\{aabbb,ab,b\},\\
Z_9\cap A^{\le 5}&=&\{ab,b\},\\
Z_{10}\cap A^{\le 5}&=&\{ababb,abb,b\},\\
Z_{11}\cap A^{\le 5}&=&\{abb,b\},\\
Z_{12}\cap A^{\le 5}&=&\{abbb,b\},\\
Z_{13}\cap A^{\le 5}&=&\{abbbb,b\},\\
Z_{14}\cap A^{\le 5}&=&\{b\},\\
\end{eqnarray*}
\end{example}

The following result is analogous with Proposition 4.1 in~\cite{Reutenauer1993}
which is stated for Hall sets (and requires a total order on $M(A)$).
\begin{proposition}\label{propositionLazard}
Assume that $A^*$ is totally ordered with an order such that
for any $u,v\in A^*$, if $u<v$, then $u<uv$. Then there is
a unique Lazard set $Z$ ordered by the restriction to $Z$ of this order.
\end{proposition}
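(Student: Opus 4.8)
The plan is to build $Z$ by a length‑by‑length Lazard elimination driven by the given order, and then to show that inside any Lazard set the successive choices are forced by the order, which gives uniqueness.

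\emph{Construction.} Fix $n\ge 1$. Put $Z_1=A$ and, as long as $Z_i\cap A^{\le n}\neq\emptyset$, let $z_i$ be the least element of $Z_i\cap A^{\le n}$ and set $Z_{i+1}=z_i^*(Z_i\setminus z_i)$. Three points need checking. Each $Z_i$ is a prefix code: this holds for $Z_1=A$ and is preserved by $Y\mapsto y^*(Y\setminus y)$ when $y\in Y$ (a short direct verification); since $A^{\le n}$ is finite, $Z_i\cap A^{\le n}$ is finite, so $z_i$ exists. The $z_i$ strictly increase: an element of $Z_{i+1}\cap A^{\le n}$ is of the form $z_i^m w$ with $w\in Z_i\cap A^{\le n}$ and $w\neq z_i$, hence $w>z_i$; and since $z_i<z_i^m w$ implies $z_i<z_i^{m+1}w$ by the hypothesis on the order, induction gives $z_i<z_i^m w$ for all $m\ge 0$, so $z_{i+1}>z_i$. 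Finally the process stops, for otherwise it would produce infinitely many pairwise distinct words of length $\le n$. Write $Z^{(n)}=\{z_1,\dots,z_{k_n}\}$ for the finite set obtained; then $z_i\in Z_i$ and $Z_{k_n+1}\cap A^{\le n}=\emptyset$.

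\emph{Coherence.} The step I expect to be the main obstacle is to prove that $Z^{(m)}=Z^{(n)}\cap A^{\le m}$ for $m\le n$. I would run the $m$- and $n$-processes in parallel, keeping the invariant: after the $n$-process has made $i$ choices, of which exactly $t$ have length $\le m$, those $t$ words are the first $t$ choices of the $m$-process in order, and the two current prefix codes have the same intersection with $A^{\le m}$. In the inductive step, if the $(i{+}1)$-st choice of the $n$-process has length $>m$, the elements it newly creates all have length $>m$ and the trace on $A^{\le m}$ is unchanged; if it has length $\le m$, then, being the least element of $Z_{i+1}\cap A^{\le n}$ and lying in $A^{\le m}$, it is also the least element of $Z_{i+1}\cap A^{\le m}$, which by the invariant is exactly the set from which the $m$-process makes its next choice, so the two stay in step. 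Now set $Z=\bigcup_{n\ge 1}Z^{(n)}$; coherence gives $Z\cap A^{\le n}=Z^{(n)}$ for all $n$, and since within $Z^{(n)}$ the words were produced in increasing order and satisfy $Z_1=A$, $Z_{i+1}=z_i^*(Z_i\setminus z_i)$, $z_i\in Z_i$ and $Z_{k_n+1}\cap A^{\le n}=\emptyset$, the set $Z$ ordered by the restriction of the given order is a Lazard set.

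\emph{Uniqueness.} Let $Z$ be any Lazard set whose order is the restriction of the given one, fix $n$, and write $Z\cap A^{\le n}=\{z_1<\dots<z_k\}$ with the associated codes $Z_i$. If for some $i$ there were $w\in Z_i\cap A^{\le n}$ with $w<z_i$, then $z_j>z_i>w$ for every $j\ge i$, so $w\neq z_j$ and $w\in Z_j\setminus z_j\subseteq Z_{j+1}$ for $i\le j\le k$; taking $j=k$ yields $w\in Z_{k+1}\cap A^{\le n}=\emptyset$, a contradiction. Hence each $z_i$ is the least element of $Z_i\cap A^{\le n}$, and since $Z_1=A$ and $Z_{i+1}=z_i^*(Z_i\setminus z_i)$, the words $z_1,z_2,\dots$, hence $Z\cap A^{\le n}$, and finally $Z$, are determined by the order alone. (The hypothesis on the order is used only in the construction, to make the $z_i$ increase; uniqueness holds for any total order.)
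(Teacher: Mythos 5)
Your proof is correct and follows essentially the same route as the paper: a greedy construction that at each step eliminates the least element of $Z_i\cap A^{\le n}$, with the same induction on powers of $z_i$ (using the hypothesis $u<v\Rightarrow u<uv$) to get $z_1<z_2<\cdots<z_k$. You additionally spell out two points the paper treats very briefly — the compatibility of the constructions for different bounds $n$ (your parallel-run invariant, which is the right way to phrase it since the order need not refine length) and the explicit uniqueness argument showing each $z_i$ in any such Lazard set is forced to be the minimum of $Z_i\cap A^{\le n}$ — both of which are sound.
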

\begin{proof}
We show for each $n\ge 1$ the existence and uniqueness
of a sequence $(z_i,Z_i)$ of pairs of a word $z_i$ and a set $Z_i$
such that $z_i\in Z_i$ and satisfying for $1\le i\le k$
conditions \eqref{equationz_i}, \eqref{equationZ_i} and \eqref{equationZ_{k+1}}.

Let $n\ge 1$. Starting with $Z_1=A$, we define for $i\ge 1$
a pair $(z_i,Z_i)$
of a word $z_i$ and a maximal prefix code $Z_i$ as follows.
We choose $z_i\in Z_i$ as the minimal element of $Z_i$
of length at most $n$ and set $Z_{i+1}=z_i^*(Z_i\setminus z_i)$.
We stop when $Z_i\cap A^{\le n}=\emptyset$.

Let $k$ be the first index such that $Z_{k+1}\cap A^{\le n}=\emptyset$.
Then, $z_1<z_2<\ldots<z_k$. Indeed, let $1\le i\le k-1$ and 
set $z_{i+1}=z_i^pz$ with
$z\in Z_i\setminus z_i$. Let us show by induction on $0\le q\le p$ that
 $z_i<z_i^qz$. It is true for $q=0$ since $z_i$ is minimal in $Z_i$.
Next, for $q\ge 1$, set $u=z_i$ and $v=z_i^{q-1}z$. Then
$u<v$ by induction hypothesis and thus $u<uv$ by the hypothesis
made on the order. Thus $z_i<z_i^qz$.

The sequence defined as above for each $n$ is clearly an
initial part of the sequence defined for $n+1$. Thus
there is a unique set $Z$ such that $Z\cap A^{\le n}=\{z_1,\ldots,z_k\}$.
\end{proof}

\section{Comma-free codes}\label{sectionCodes}
In this section, we introduce some basic notions concerning codes (see~\cite{BerstelPerrinReutenauer2010}
for a more detailed presentation).
Let $A$ be an alphabet. A set $X\subset A^+$ is a \emph{code} if
every word in $A^*$ has at most one factorization in words of $X$.
Formally,
for any $x_1,\ldots,x_n$ and $y_1,\ldots y_m$ in $X$ one has
\begin{displaymath}
x_1\cdots x_n=y_1\cdots y_m
\end{displaymath}
 only if $n=m$
and $x_i=y_i$ for $i=1,\ldots,n$. 

A \emph{prefix code} is a set $X\subset A^+$ such that no
word of $X$ is a prefix of another word of $X$.

A \emph{circular code} is a set $X\subset A^+$ such that any word
written on a circle has at most one factorization in words of $X$.
Formally, for any $x_1,\ldots,x_n$ and $y_1,\ldots y_m$ in $X$ 
and $p\in A^*$, $s\in A^+$, one has
\begin{displaymath}
sx_2\cdots x_np=y_1\cdots y_m, \ x_1=ps
\end{displaymath}
 only if $n=m$, $p=\varepsilon$,
and $x_i=y_i$ for $i=1,\ldots,n$. 

It can be shown that a code $X$ is circular if and only if the submonoid $X^*$
satisfies the following condition. For any $u,v\in A^*$, one has
\begin{displaymath}
uv,vu\in X^*\ \Rightarrow u,v\in X^*.
\end{displaymath}

Let $n\ge 1$.
A set $X\subset A^n$ is a \emph{comma-free code} 
if the following property holds. For any $x,y,z\in X$,
if $x$ is a factor of $yz$, then $x=y$ or $x=z$.

One may verify that $X\subset A^n$ is comma-free if and only if
for any $x\in X^+$ and $u,v\in A^*$,
\begin{equation}
uxv\in X^*\Rightarrow u,v\in X^*.\label{eqComma}
\end{equation}

It is clear that a comma-free code is circular. In particular 
a comma-free code of length $n$ contains
only primitive words and at most one element of each conjugacy class
of primitive words of length $n$.

Denote by $\ell_n(k)$ the number of primitive conjugacy classes
of primitive words of length $n$ on an alphabet with $k$ elements.

\begin{theorem}[Eastman]\label{theoremEastman}
For any alphabet $A$ with $k$ letters and for any odd integer $n\ge 1$,
there is a comma-free code $X\subset A^n$ with $\ell_n(k)$ elements.
\end{theorem}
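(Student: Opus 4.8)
The plan is to obtain Eastman's theorem as a corollary of the Lazard/Hall-set machinery rather than to reconstruct Eastman's combinatorial argument from scratch. The key observation is that for a Lazard set $Z$ (equivalently a Hall set of words), the intersection $Z\cap A^n$ behaves like a transversal of conjugacy classes: by the general theory recalled in Section~\ref{sectionHall}, every primitive conjugacy class of length $n$ meets $Z$ in exactly one word, so $\Card(Z\cap A^n)=\ell_n(k)$. Thus the whole content of Theorem~\ref{theoremEastman} reduces to showing that for \emph{some} Lazard set $Z$, and for every odd $n$, the set $Z\cap A^n$ is a comma-free code. This is precisely the statement promised in Section~\ref{sectionNewLazard} (existence of the Lazard set realizing Eastman's construction, Theorem~\ref{theoremMain}), so the proof of Theorem~\ref{theoremEastman} I would give is: invoke Proposition~\ref{propositionEastman} together with Theorem~\ref{theoremMain} to exhibit the desired code, and invoke the Hall-transversal property to count it.

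Concretely, I would structure the proof in three steps. First, recall (from the cited facts about Hall sets of words) that the foliage map from Hall trees of degree $n$ to words of length $n$ is injective and its image is a complete set of representatives of primitive conjugacy classes; hence any Lazard set $Z$ satisfies $\Card(Z\cap A^n)=\ell_n(k)$ for all $n\ge 1$. Second, appeal to the construction of Section~\ref{sectionEastman}: Eastman's algorithm, reformulated via dips and superdips (Section~\ref{sectionDips}), produces for each odd $n$ a set $E_n\subset A^n$ which is a comma-free code with $\ell_n(k)$ elements (Proposition~\ref{propositionEastman}). Third, invoke the existence result of Section~\ref{sectionNewLazard}: there is a single Lazard set $Z$ with $Z\cap A^n=E_n$ for every odd $n$. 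Combining, $Z\cap A^n$ is a comma-free code of size $\ell_n(k)$, which is the assertion.

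The genuinely hard part is not in this short deduction but in the two ingredients it quotes, and if one wanted a self-contained proof the obstacle is showing that Eastman's sets $E_n$ patch together into a Lazard set. One must verify that the nested prefix-code sequence $Z_1=A$, $Z_{i+1}=z_i^*(Z_i\setminus z_i)$, when driven by choosing at each stage the appropriate minimal element, actually reproduces Eastman's superdip-based transformation; equivalently, that the order on words coming from Eastman's construction satisfies the monotonicity hypothesis $u<v\Rightarrow u<uv$ of Proposition~\ref{propositionLazard} so that a Lazard set exists, and that the resulting set restricted to length $n$ coincides with $E_n$. The comma-free property of $E_n$ itself (for odd $n$) is the combinatorial heart inherited from Knuth's analysis of dips: one checks that if $x$ is an internal factor of a concatenation $yz$ of Eastman words then the ``dip pattern'' forces $x=y$ or $x=z$, using oddness of $n$ to rule out the degenerate overlap. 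For the present theorem, however, all of this is packaged in the earlier results, so the proof is the two-line combination above plus the counting remark.
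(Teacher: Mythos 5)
Your deduction is logically sound and not circular (neither Proposition~\ref{propositionEastman} nor Theorem~\ref{theoremMain} depends on Theorem~\ref{theoremEastman}), but it takes a roundabout route compared with the paper. The paper obtains Theorem~\ref{theoremEastman} directly from the constructions themselves: Section~\ref{sectionScholtz} reproduces Scholtz's Hall-sequence construction (choosing at each stage a word of minimal odd length) and quotes from \cite{BerstelPerrinReutenauer2010} that for every odd $n$ the words of length $n$ in the union of the $X_i$ form a comma-free code with $\ell_n(k)$ elements; and Section~\ref{sectionEastman} yields a second proof, since Proposition~\ref{propositionEastman} gives a comma-free code of length $m$ meeting every primitive conjugacy class, and (as noted in Section~\ref{sectionCodes}) a comma-free code contains at most one word per primitive conjugacy class, hence exactly $\ell_m(k)$ words. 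Your two extra ingredients --- Theorem~\ref{theoremMain} and the transversal property of Lazard sets --- are correct but redundant: Proposition~\ref{propositionEastman} alone already settles the count, and Theorem~\ref{theoremMain} is itself proved \emph{using} Proposition~\ref{propositionEastman}, so routing through the Lazard set $Z$ buys nothing for this statement. In particular, the step you single out as the ``genuinely hard part'' (patching the Eastman codes into a single Lazard set, i.e.\ Section~\ref{sectionNewLazard}) is the point of the paper's main theorem but is not needed for Eastman's theorem; what is actually needed is the comma-freeness of the superdip codes (Proposition~\ref{propositionOverlap} and Corollary~\ref{corollaryComma}) and the conjugation algorithm (Proposition~\ref{propositionConjugate}), which you quote rather than prove, exactly as the paper's own treatment does.
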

\section{Scholtz algorithm}\label{sectionScholtz}
We reproduce here the construction of Scholtz giving
a proof of Theorem~\ref{theoremEastman} (see~\cite{BerstelPerrinReutenauer2010}
for more details).

We say that a sequence $(x_i,X_i)_{i\ge 1}$ of pais of a word $x_i$ and a set
$X_i$ containing $x_i$ is a \emph{Hall sequence} if it is obtained as follows.
Let $X_1=A$ and let $x_1$ be an arbitrary letter. If $x_i$ and $X_i$ are
already defined, then $X_{i+1}$ is defined by
\begin{equation}
X_{i+1}=x_i^*(X_i\setminus x_i),\label{eqHallSequence}
\end{equation}
and $x_{i+1}$ is an element of $X_{i+1}$ such that $|x_{i+1}|\ge |x_i|$.

To describe Scholtz construction, we build a particular Hall sequence.
For each $i\ge 1$, assuming $(x_1,X_1),\ldots(x_i,X_i)$ already
build, we choose $x_{i+1}$ in $X_i$ as a word of minimal odd length
in $X_i$ and we define $X_{i+1}$ by \eqref{eqHallSequence}.

The following is given in \cite{BerstelPerrinReutenauer2010}
as a proof of Theorem~\ref{theoremEastman}. Set $k=\Card(A)$.

\begin{theorem}
For every odd integer $n$, the set of words of length $n$
in the union of the $X_i$ is a comma-free code with
$\ell_n(k)$ elements.
\end{theorem}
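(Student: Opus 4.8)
The plan is to deduce this theorem from the general theory of Hall/Lazard sets together with the comma-free property established later in the paper. First I would observe that the Scholtz construction builds a particular Hall sequence $(x_i,X_i)_{i\ge1}$, and hence, by the characterization of Lazard sets (and Viennot's theorem that Lazard sets and Hall sets of words coincide), the set $Z=\bigcup_i\{x_i\}$ is a Lazard set of words once we check that the stated choice rule — pick $x_{i+1}$ of minimal \emph{odd} length in $X_i$ — together with the constraint $|x_{i+1}|\ge|x_i|$ actually produces, for each bound $n$, exactly the finite Lazard construction of Proposition~\ref{propositionLazard} relative to a suitable order. The key point to nail down here is that restricting to odd lengths is harmless: every $X_i$ contains a word of odd length (in fact a word of length $1$ can never be removed until the very end on a binary-type step, and more carefully, each maximal prefix code $X_i$ over $A$ contains words of all sufficiently large lengths, in particular odd ones), so the procedure never gets stuck and the resulting $Z$ is genuinely a Hall set of words.

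Next I would pin down the foliage/factorization structure. By the cited results (each Hall word is the foliage of a unique Hall tree, \cite[Corollary 4.5]{Reutenauer1993}), the words in $Z$ are Hall words, hence Lyndon-like representatives: $Z$ contains exactly one word from each conjugacy class of primitive words, and moreover (this is the crucial length bookkeeping) the number of words of length $n$ in a Hall set equals $\ell_n(k)$. This last equality is the standard fact that a Hall set is a complete factorization of the free monoid, so the generating function identity $\prod_{z\in Z}(1-t^{|z|})^{-1}=(1-kt)^{-1}$ holds, which upon taking logarithms gives that the number of Hall words of each length $n$ is precisely the number of primitive conjugacy classes $\ell_n(k)$. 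Thus for the odd integer $n$, the set $Z\cap A^n$ has exactly $\ell_n(k)$ elements and contains one representative per primitive conjugacy class of that length.

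It then remains to prove that $Z\cap A^n$ is comma-free, and this is where the odd-length rule and the specific order of construction matter. I would use the characterization~\eqref{eqComma}: $X\subset A^n$ is comma-free iff for all $x\in X^+$ and $u,v\in A^*$, $uxv\in X^*$ implies $u,v\in X^*$. Since $X=Z\cap A^n$ and $Z$ is Lazard, $X^*$ sits inside the monoid generated by the prefix codes $X_i$, and the Lazard/Hall factorization gives strong control: any element of $X^*$ admits a unique decomposition through the chain $A=X_1\supset\cdots$, and the lengths involved are all multiples of $n$. The argument — essentially the one in \cite{BerstelPerrinReutenauer2010} — is that if $uxv\in X^*$ with $x$ a product of words of length $n$, then reading the factorization and using that $n$ is odd while the intermediate Hall words chosen before reaching length $n$ all have odd length as well, one forces $|u|$ and $|v|$ to be multiples of $n$ and then, by circularity of the Lazard set (which is circular since it is a complete factorization), $u,v\in X^*$. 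The main obstacle, and the step I expect to require the most care, is exactly this comma-free verification: one must exploit the parity (oddness of $n$ and of all the chosen $x_i$) to rule out the "straddling" factorizations that a mere circular code would permit, and to make this rigorous I would isolate a lemma stating that in the Hall sequence built by Scholtz, every word selected has odd length, so that the submonoid $Z^*$ restricted to lengths divisible by $n$ behaves like the free monoid on $Z\cap A^n$; everything else is then bookkeeping with the generating function and the uniqueness of Hall foliage.
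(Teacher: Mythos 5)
Your counting argument is fine: a Hall (equivalently Lazard) set is a complete factorization of $A^*$, hence contains exactly one representative of each primitive conjugacy class, and the generating-function identity gives $\ell_n(k)$ words of each length $n$; this matches the route the paper itself takes (it does not reprove this theorem but attributes it to \cite{BerstelPerrinReutenauer2010}). The genuine gap is the comma-free verification, which you explicitly defer and then dispose of with an unsupported claim. You assert that if $uxv\in X^*$ with $X=Z\cap A^n$, then ``the parity forces $|u|$ and $|v|$ to be multiples of $n$'' and that circularity then finishes the argument. No argument is given for the forcing step, and it is exactly the whole content of the theorem: \emph{any} set of words of length $n$ containing at most one element per primitive conjugacy class is automatically a circular code, so circularity plus one-representative-per-class is strictly weaker than comma-freeness, and condition~\eqref{eqComma} is precisely what distinguishes the two. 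The lemma you propose to isolate --- that every word selected in the Scholtz sequence has odd length --- is true by the very definition of the choice rule and yields nothing by itself; the real work (carried out in \cite{BerstelPerrinReutenauer2010}, and for the Eastman variant in Proposition~\ref{propositionEastman} via Proposition~\ref{propositionOverlap} and Corollary~\ref{corollaryComma}) is a structural overlap argument exploiting synchronization/prefix-code properties of the intermediate codes, not a mere parity count of the chosen words.

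Two smaller inaccuracies: your parenthetical justification that the procedure never gets stuck is wrong as stated (a maximal prefix code need not contain words of all sufficiently large lengths, e.g.\ $A^2$; and letters are removed \emph{first}, not last, since they have minimal odd length $1$), although the conclusion that each $X_i$ contains odd-length words can be salvaged. Also, the theorem concerns the words of length $n$ in $\bigcup_i X_i$, not just the chosen words $x_i$; for odd $n$ these sets coincide, but that requires a short argument (every odd-length word of length $\le n$ appearing in some $X_i$ is eventually chosen, because removed words never reappear and only finitely many words have length $\le n$), which you do not supply.
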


We now relate Scholtz construction with Hall sets.

Let $(X_n,x_n)_{n\ge 1}$ be a Hall sequence and let $U=\bigcup_{n\ge 1} X_n$.
We use for $u\in U$, as in \cite[Section 7.3]{BerstelPerrinReutenauer2010}, the notation
\begin{eqnarray*}
\nu(u)&=&\min\{i\ge 1\mid u\in X_i\}-1,\\
\delta(u)&=&\sup\{i\ge 1\mid u\in X_i\}.
\end{eqnarray*}
 We define
a parenthesis map $\pi:U\rightarrow M(A)$  by $\pi(a)=a$ for $a\in A$
and then inductively $\pi(x)=(\pi(x_n),\pi(y))$ if $\nu(x)=n$ and $x=x_ny$.
 \begin{proposition} The set
$\pi(U)$ is a Hall set for the order
defined by $\pi(u)<\pi(v)$ if $|u|$ is odd and $|v|$ even
or if $|u|,|v|$ have the same parity and $u<v$ for the radix order.
\end{proposition}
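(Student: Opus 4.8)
The plan is to deduce the statement from Proposition~\ref{propositionLazard} together with Viennot's theorem identifying Hall sets of words with Lazard sets, by showing that the Hall sequence $(X_i,x_i)$, the set $U$ and the map $\pi$ are just another face of the Lazard set attached to the order in the statement.

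\medskip\noindent\textit{Step 1: the order is of Lazard type.} First I would check that the order $\prec$ described in the statement — put $u\prec v$ when $|u|$ is odd and $|v|$ even, or when $|u|\equiv|v|\pmod 2$ and $u<v$ in radix order — is a total order on $A^*$ satisfying the hypothesis of Proposition~\ref{propositionLazard}, namely $u\prec v\Rightarrow u\prec uv$. This is a short case analysis on the parities of $|u|$ and $|v|$; the key observation is that $u\prec v$ already forces $|v|$ to be even whenever $|u|$ is even, which rules out the only case that could cause trouble, and in the three remaining cases one checks directly that $|uv|$ has the required parity and that $u<uv$ in radix order. Proposition~\ref{propositionLazard} then gives a unique Lazard set $Z$ ordered by $\prec$; by Viennot's theorem (cited in the excerpt) $Z$ is a Hall set of words, and the associated Hall set of trees is $\pi_Z(Z)$, where $\pi_Z$ is the map attached to a Lazard set in Section~\ref{sectionHall}.

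\medskip\noindent\textit{Step 2: the Hall sequence recovers $Z$.} The core of the argument is to prove $U=Z$ and $\pi=\pi_Z$. I would do this by comparing, for each fixed $n\ge 1$, the Hall sequence with the iteration $Z_1=A$, $Z_{i+1}=z_i^*(Z_i\setminus z_i)$ defining $Z$ on $Z\cap A^{\le n}$. The dictionary is that $X_i$ plays the role of $Z_i$, and that ``choose $x_i$ of minimal odd length'' coincides with ``choose the $\prec$-least element of $Z_i\cap A^{\le n}$'': by the very definition of $\prec$, as long as $X_i\cap A^{\le n}$ contains a word of odd length, its $\prec$-least element is exactly the radix-least among the shortest odd-length words, which is the Scholtz choice; a Scholtz step with $|x_i|>n$ leaves $X_i\cap A^{\le n}$ unchanged and is matched with ``doing nothing'' at level $n$; and once $X_i\cap A^{\le n}$ consists only of even-length words it agrees with the corresponding $Z_i\cap A^{\le n}$ and the remaining even-length words are enumerated and exhausted in the same way on both sides, so that $Z_{k+1}\cap A^{\le n}=\emptyset$ is reached simultaneously. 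Letting $n\to\infty$ gives $U=Z$, after which a straightforward induction on $|w|$ (both $\pi$ and $\pi_Z$ are defined from $\pi(a)=a$ by peeling off the same left factor $x_i$, where $i=\nu(w)$) yields $\pi=\pi_Z$. Then $\pi(U)=\pi_Z(Z)$ is the Hall set of trees associated to $Z$, which is the assertion. (Alternatively one can avoid the detour through $Z$ and verify the Hall closure condition for $\pi(U)$ directly from the $\nu,\delta,\pi$ formalism, using that the left factor of $\pi(w)$ is $\pi(x_{\nu(w)})$; but the Lazard route is cleaner and reuses Proposition~\ref{propositionLazard}.)

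\medskip\noindent\textit{Main obstacle.} The delicate point is the matching in Step~2, and within it the bookkeeping around parity: one must show carefully that the ``odd first, then even'' structure built into $\prec$ is faithfully mirrored by the minimal-odd-length rule, and in particular that, once the odd-length words of length $\le n$ have all been used up, the even-length words still sitting in $X_i\cap A^{\le n}$ are precisely those the Lazard iteration would go on to pick — with no discrepancy between the words created during the odd phase and those accounted for on the Lazard side. Controlling this interaction between the global Hall sequence and the per-length Lazard construction is where the real work lies; the rest (Step~1 and the inductive identification of $\pi$ with $\pi_Z$) is routine once the sets are known to coincide.
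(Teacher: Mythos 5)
Your Step 1 is correct: the order $\prec$ of the statement does satisfy the hypothesis $u\prec v\Rightarrow u\prec uv$ of Proposition~\ref{propositionLazard}, so the Lazard set $Z$ for $\prec$ exists and is unique. The genuine gap is in Step 2, and it sits exactly at the point you flag as the main obstacle: the claim that, after the odd-length words of length at most $n$ are used up, the remaining even-length words are ``enumerated and exhausted in the same way on both sides''. They are not. On the Lazard side, the definition forces $Z_{k+1}\cap A^{\le n}=\emptyset$, so the even-length words of length at most $n$ must themselves be eliminated; each such elimination $Z_{i+1}=z_i^*(Z_i\setminus z_i)$ with $z_i$ of even length creates new even-length words of length at most $n$ (products of two even-length words), which are eliminated in turn and hence belong to $Z$. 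The Scholtz--Hall sequence never does this: only odd-length words are ever chosen as $x_i$, so no product of two even-length words is ever formed and none enters $U=\bigcup X_i$. Concretely, take $A=\{a,b\}$, $a<b$, with the radix-least choices your dictionary prescribes: $ab$ and $aaab$ lie in every $X_i$ from $X_2$ on and are never eliminated, and one checks that $abaaab$ belongs to no $X_i$ (every non-letter of $U$ begins with some eliminated odd-length word, and neither $a\cdot baaab$, nor $aba$, nor $abaaa$ can arise), so $abaaab\notin U$; but in the Lazard elimination at level $6$, once the odd words of length at most $6$ are exhausted, $ab$ is the $\prec$-least remaining word, its elimination creates $ab\cdot aaab=abaaab$, and this word must later be eliminated, so $abaaab\in Z$. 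Hence $U\neq Z$, $\pi(U)\neq\pi_Z(Z)$, and the identification on which your whole Step 2 rests is false; the route through $U=Z$ cannot be repaired, only the odd-length parts of the two constructions match.

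Note also that the paper does not actually carry out the Lazard identification: after the one-line remark that the statement ``follows from'' the coincidence of Hall and Lazard sets, its proof is the direct verification that you mention only in a closing parenthesis, namely that for $x,y\in U$ one has $(\pi(x),\pi(y))\in\pi(U)$ if and only if $\nu(y)\le\delta(x)<\delta(y)$, together with $\nu(y)=\delta(z)$ when $\pi(y)=(\pi(z),\pi(t))$, from which the Hall condition is read off. If you want a complete argument you should develop that route, with the functions $\nu(u)=\min\{i\mid u\in X_i\}-1$ and $\delta(u)=\sup\{i\mid u\in X_i\}$; be aware that it requires care for pairs of even-length words, for which $\delta=\infty$ (the same pair $ab$, $aaab$ is the test case to keep in mind when checking how the stated order interacts with the ``if'' direction of the Hall condition).
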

Actually, this follows from the genereral
result that Hall sets coincide with Lazard sets, as seen above.

 This can be verified directly as follows.
For any $x,y\in U$, one has
\begin{displaymath}
(\pi(x),\pi(y))\in\pi(U)\Leftrightarrow \nu(y)\le \delta(x)< \delta(y)
\end{displaymath}
and  if $\pi(y)=(\pi(z),\pi(t))$, one has $\nu(y)=\delta(z)$. Thus
$\pi(U)$ is a Hall set.

\section{Dips and superdips}\label{sectionDips}
We now come to the descrition of Eastman's construction, as
presented in~\cite[Exercise 32]{Knuth2015}.

Let $A$ be a totally ordered alphabet with at least two elements. 
Consider the set
\begin{displaymath}
D(A)=\{a_1a_2\cdots a_n\mid a_i\in A, n\ge 2, a_1\ge a_2\ge\ldots\ge a_{n-1}<a_n\}.
\end{displaymath}
The elements of $D(A)$ are called \emph{dips}. Let $S(A)$
be the words formed of a dip of odd length followed by a (possibly empty)
sequence of dips of even length. The elements of $S(A)$
are called \emph{superdips}.

\begin{example}
 For $A=\{a,b,c\}$ and $a<b<c$, we have $D(A)=c^*b^*a^*(ab+ac)+c^*b^*bc$.

\end{example}
Let $X\subset A^*$ be a maximal prefix code. A word $x\in X^*$ is \emph{synchronizing}
if for any $u\in A^*$, one has $ux\in X^*$.
\begin{proposition}
 The set  $D(A)$ is a maximal prefix code such that each word of $D(A)$
of length at least $3$  is synchronizing. 
\end{proposition}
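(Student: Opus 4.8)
The plan is to prove the three assertions separately: that $D(A)$ is a prefix code, that it is maximal, and that every dip of length $\ge 3$ is synchronizing.

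First I would check that $D(A)$ is a prefix code. Suppose $u = a_1\cdots a_n$ and $w = a_1\cdots a_n b_{n+1}\cdots b_m$ are both dips with $u$ a proper prefix of $w$. From the defining inequalities for $u$ we have $a_{n-1} < a_n$, so position $n-1$ is the (unique) ascent of $u$. But the defining inequalities for $w$ require $b_1 \ge b_2 \ge \cdots \ge b_{m-1} < b_m$, i.e.\ the letters of $w$ are weakly decreasing up to the second-to-last position; in particular $a_{n-1} \ge a_n$ since $n-1 \le m-1$ (using $n < m$). This contradicts $a_{n-1} < a_n$, so no dip is a proper prefix of another, and $D(A)$ is a prefix code.

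Next, maximality. A prefix code $X$ over a finite alphabet is maximal as a prefix code iff it is \emph{complete}, i.e.\ every word of $A^*$ is a prefix of some word of $XA^*$; equivalently, every sufficiently long word has a prefix in $X$. Given any word $w$ of length $\ge 2$, I would locate the first ascent: let $i$ be minimal with $a_i < a_{i+1}$ (if $a_1 \ge a_2 \ge \cdots$, keep reading; since $A$ is finite and $w$ can be taken arbitrarily long — or rather, one argues that any infinite word or any long enough word must eventually have an ascent once it can no longer strictly decrease... more carefully: a weakly decreasing sequence over a finite ordered alphabet can have length at most $|A|$ before it must either stay constant or ascend). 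The clean statement is: any word of length $\ge 2$ either has a prefix that is a dip, or is itself weakly decreasing. A weakly decreasing word of length $n$ has no prefix in $D(A)$, but appending any strictly larger letter — or, since $|A|\ge 2$, noting that one can always extend — shows $D(A)$ is complete as a prefix code: I would argue every word $w$ is a prefix of a word in $D(A)A^*$ by taking $w$, extending it by a block of copies of the minimal letter and then the maximal letter, which creates an ascent and hence a dip prefix somewhere. Hence $D(A)$ is a maximal prefix code. (Alternatively one computes $\sum_{d\in D(A)} |A|^{-|d|} = 1$ directly from the rational expression, e.g.\ the one in the Example, which also gives maximality via Kraft's inequality.)

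The main obstacle is the synchronization claim, so I would spend most effort there. Let $d = a_1 a_2\cdots a_n \in D(A)$ with $n \ge 3$, so $a_1 \ge a_2 \ge \cdots \ge a_{n-1} < a_n$ and additionally $a_{n-2} \ge a_{n-1}$. I must show: for every $u \in A^*$, $ud \in D(A)^*$. Write $u$ in its canonical factorization as a product of dips followed by a weakly-decreasing suffix — concretely, greedily parse $u$ left to right, at each step cutting off the shortest dip prefix; this terminates with a (possibly empty) weakly decreasing remainder $r$, so $u = d_1 d_2\cdots d_k r$ with each $d_j \in D(A)$ and $r = c_1 \ge c_2 \ge \cdots \ge c_\ell$. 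Then $ud = d_1\cdots d_k (r d)$, and it suffices to show $rd \in D(A)^*$. Now $rd = c_1\cdots c_\ell a_1 \cdots a_n$. The point is that $a_{n-1} < a_n$ gives an ascent inside $rd$ no earlier than position... I would parse $rd$ greedily: its first ascent occurs at some position $\le n+\ell-1$ because of the ascent $a_{n-1}<a_n$; cutting there yields a dip prefix $p$, and the remaining suffix of $rd$ is a (possibly empty) suffix $a_{j}\cdots a_n$ of $d$ with $j \ge$ the position after $p$. Crucially, because $r$ is weakly decreasing and $a_1 \ge \cdots \ge a_{n-1}$, the first ascent of $rd$ is exactly at position $n+\ell-1$ (the copy of the ascent $a_{n-1} < a_n$) — unless $c_\ell < a_1$, but that cannot happen because $a_1 = \max(a_1,\ldots,a_{n-1})$ and... here one needs $a_1 \ge a_{n-1}$, true, but $c_\ell$ versus $a_1$ is not controlled. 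This is the delicate case: if $c_\ell < a_1$ the first ascent of $rd$ is inside $r{\cdot}a_1$, giving a shorter dip prefix, and one recurses on the rest. I would therefore prove the synchronization by strong induction on $|u|$ (equivalently on $\ell$, the length of the decreasing tail): the greedy parse of $rd$ peels off one dip whose suffix-remainder is again of the form $r' d'$ with $d'$ a suffix of $d$ containing the ascent $a_{n-1}<a_n$ — hence $d'$ has length $\ge 2$ and is itself a dip or can be absorbed — and $|r'| < |r|$, closing the induction; the base case $r = \varepsilon$ is immediate since then $rd = d \in D(A)$. The one subtlety to handle with care is ensuring the suffix of $d$ that survives each peeling step still ends with the ascent, so it never degenerates to a single letter or an empty string prematurely; this is exactly where the hypothesis $n \ge 3$ (equivalently, $d$ has a letter before the descent-then-ascent pattern, $a_{n-2}\ge a_{n-1}<a_n$) is used.
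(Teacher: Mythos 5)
Your proposal is correct, and all three parts (prefix, maximal, synchronizing) are established soundly; the interesting difference lies in the synchronization argument. The paper parses the whole word $ux$ greedily as $yq$ with $y\in D(A)^*$ and $q$ a proper prefix of a dip, observes that the final ascent $ac$ of $x$ can be neither an internal factor of a dip nor inside $q$, and then excludes the remaining bad case $q=c$ because $y$ would have to end with $ba$, $b\ge a$, which is not a dip suffix -- a short argument by exclusion. You instead factor $u$ itself as $d_1\cdots d_k r$ with $r$ weakly decreasing and then exhibit the factorization of $rd$ explicitly: either the junction carries no ascent and $rd$ is a single dip, or $c_\ell<a_1$ and $rd=(c_1\cdots c_\ell a_1)(a_2\cdots a_n)$, the second factor being a dip precisely because $n\ge 3$. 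This is where your strong induction is overkill: the recursion terminates after one peeling step (your $r'$ is empty), so a two-case analysis suffices and you could state it as such. What your route buys is an explicit dip factorization of $ux$ and a very visible use of the hypothesis $|x|\ge 3$ (for $|x|=2$ the leftover would be a single letter, matching the paper's remark that length-$2$ dips need not be synchronizing); what the paper's route buys is brevity, since it never needs the decomposition of $u$. Your prefix-code and maximality arguments coincide in substance with the paper's (the paper extends a weakly decreasing word by one or two letters; you append a minimal then a maximal letter), and the parenthetical Kraft-sum alternative is fine but unnecessary.
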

\begin{proof}
 It is clear that $D(A)$ is a prefix code.
To show that it is maximal, assume that $w$
has no prefix in $D(A)$. Then $w=a_1a_2\cdots a_k$
with $a_1\ge \ldots\ge a_k$. If $a_k$ is not 
the largest letter, then $wb\in D(A)$ for a letter $b>a_k$.
Otherwise, $wab$ is in $D(A)$ for $a_k>a<b$
($a,b$ exist since $A$ has at least two elements).

Let $u\in A^*$ and let $x\in D(A)$ be of length
at least $3$. We show that $ux\in D(A)^*$,
which will imply that $x$ is a synchronizing word.
Set $x=pbac$ with $p\in A^*$ and $a,b,c\in A$ and
$b\ge a<c$. Set $ux=yq$ with $y\in D(A)^*$ and $q$ a proper prefix of $D(A)$.
Since $a< c$, the word $ac$ is not an internal factor of $D(A)$
nor a proper prefix of $D(A)$. This implies
that $q=c$ or $q=1$. The first case is not possible because
$y$ would end with $ba$ which is not a suffix of $D(A)$. Thus $ux\in D(A)^*$.
\end{proof}

Note that a word of $D(A)$ of length $2$ may be not synchronizing.
Indeed, for $a<b<c$, we have $bc\in D(A)$ although
$aabc\notin D(A)^*$ and thus $bc$ is not synchronizing.

\begin{proposition}
 The set $S(A)$ is a  code on the alphabet $A$.
\end{proposition}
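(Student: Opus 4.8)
The plan is to reduce the coding property of $S(A)$ to the (already established) prefix-code property of $D(A)$, using the synchronization result from the previous proposition. Concretely, suppose we have an equality
\begin{displaymath}
s_1 s_2\cdots s_p = t_1 t_2\cdots t_q
\end{displaymath}
with all $s_i,t_j\in S(A)$; I want to conclude $p=q$ and $s_i=t_i$. Since each superdip is itself a product of dips, both sides are factorizations of a common word $w$ into elements of the prefix code $D(A)$. Because $D(A)$ is a code (being a prefix code), there is only \emph{one} factorization of $w$ into dips; so the dip-decomposition of the $s_i$'s and the dip-decomposition of the $t_j$'s must be the very same sequence $d_1 d_2\cdots d_r$ of dips. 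The whole problem is therefore to show that the way this sequence of dips is grouped into superdips is unique: the superdip boundaries are forced.

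The key structural observation is that a boundary between two consecutive superdips occurs exactly where an odd-length dip begins, and that odd-length dips in the sequence $d_1\cdots d_r$ are precisely the ones that \emph{must} start a superdip — by definition a superdip is one odd dip followed by a run of even dips, so reading $d_1 d_2\cdots d_r$ left to right, each $d_i$ of odd length opens a new superdip and each $d_i$ of even length continues the current one. Thus the grouping is completely determined by the parities of the $d_i$, which are determined by the sequence of dips, which is determined by $w$. This gives $p=q$ (both equal the number of odd-length dips among $d_1,\dots,d_r$) and $s_i=t_i$ for all $i$. First I would spell out the "every word over $A$ of positive length is uniquely a product of dips" statement, noting it is just maximality plus the prefix-code property applied after possibly the word has no factorization — but here our word $w$ lies in $S(A)^*\subset D(A)^+$, so it does factor, and uniqueness is immediate from $D(A)$ being a code.

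The step I expect to be the main obstacle is handling the degenerate reading at the start: one must check that the first dip $d_1$ of $w$ does have odd length, i.e. that a product of superdips cannot begin with an even-length dip. This is true because the leftmost superdip $s_1$ begins with an odd dip by definition, and $s_1$'s own dip-decomposition is a prefix of $w$'s dip-decomposition (again by uniqueness of the dip-factorization), so $d_1$ is the first dip of $s_1$, which is odd. A mild care point is that the empty "tail" is allowed in a superdip, so a superdip may consist of a single odd dip; this is harmless for the argument. Once these bookkeeping points are in place, the conclusion — that the superdip boundaries are exactly the positions of odd-length dips, hence intrinsic to $w$ — follows, and $S(A)$ is a code. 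I would also remark that this argument in fact shows more, namely that $S(A)$ is a prefix code, since a proper prefix of a superdip either ends inside a dip (so is not in $D(A)^*$, hence not in $S(A)^*$) or ends at a dip boundary with the next dip even (so the proper prefix is not a product of superdips as it would need to end its last superdip on an even dip with no preceding odd dip in that block — here one uses that the block started by the original odd dip is not yet closed).
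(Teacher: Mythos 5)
Your main argument is correct and is in substance the paper's own proof: the paper simply observes that $D(A)$ is a prefix code over $A$ and that $S(A)$ is a suffix code over the alphabet $D(A)$ (no superdip is a proper $D(A)$-suffix of another, since all non-initial dips of a superdip are even), and then invokes composition of codes. Your explicit version --- refine both superdip factorizations to the unique dip factorization of $w$, then note that the superdip boundaries are exactly the positions where an odd-length dip begins --- spells out the same two ingredients directly instead of citing the composition theorem, which is a perfectly fine, slightly more self-contained way to package it.

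However, your closing remark that the argument ``in fact shows more, namely that $S(A)$ is a prefix code'' is false. With $a<b$, both $aab$ (an odd dip, empty tail) and $aabab=aab\cdot ab$ (an odd dip followed by one even dip) belong to $S(A)$, and the first is a proper prefix of the second. The flaw in your sketch is the phrase ``the block started by the original odd dip is not yet closed'': a superdip may end immediately after its odd dip (the empty tail you yourself noted is allowed), so a proper prefix ending at a dip boundary with the next dip even can perfectly well be a product of superdips. The correct stronger statement, and the one the paper uses, is that $S(A)$ is a suffix code when read over the alphabet $D(A)$; over $A$ it is neither prefix, as the example shows. This error does not affect the proof of the proposition itself, since the coding property only needs the forced-boundary argument, but the remark should be deleted or corrected.
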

\begin{proof}
The set $S(A)$ is a  suffix code on the alphabet $D(A)$
and $D(A)$ is a  prefix code on the alphabet $A$.
\end{proof}
Note that $S(A)$ is actually a maximal code on the alphabet $A$.
Indeed, $D(A)$ is a thin maximal prefix code on the
alphabet $A$ and $S(A)$ is a thin maximal suffix code
on the alphabet $D(A)$. Thus $S(A)$ is a thin and complete
code on the alphabet $A$ by \cite[Proposition 2.6.13]{BerstelPerrinReutenauer2010}.
\begin{proposition}\label{propositionConjugate}
 Any primitive word $w$ of odd length
$m\ge 3$ has a conjugate in $D(A)^*$ and thus in $S(A)^*$. More precisely,
the conjugate of $w$ starting after the shortest prefix
of $w^2$ which ends with a dip of length at least $3$
is in $D(A)^*$.
\end{proposition}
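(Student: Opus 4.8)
The plan is to pass to the one-sided infinite periodic word $w^{\omega}=a_1a_2a_3\cdots$ (so $a_i=a_{i+m}$ for all $i$ and $a_1\cdots a_m=w$), to pin down the position $P$ occurring in the statement, and then to read off the desired conjugate as the length-$m$ window $w'=a_{P+1}a_{P+2}\cdots a_{P+m}$, which turns out to end with a dip of length exactly $3$; the whole argument then reduces to the synchronization property of such short dips proved just above. First I would record the elementary characterization: a prefix $a_1\cdots a_j$ of $w^{\omega}$ ends with a dip of length $\ge 3$ if and only if $j\ge 3$ and $a_{j-2}\ge a_{j-1}<a_j$. Indeed, if the latter holds then $a_{j-2}a_{j-1}a_j$ is such a dip; conversely a dip $a_i\cdots a_j$ with $i\le j-2$ forces $a_{j-2}\ge a_{j-1}<a_j$, since $a_{j-2}$ lies on its weakly decreasing part. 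Thus $P$ is precisely the least $j\ge 3$ with $a_{j-2}\ge a_{j-1}<a_j$.

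Next I would show that such a $P$ exists and satisfies $P\le 2m$, so that it really corresponds to a prefix of $w^2$. Suppose no $j\ge 3$ satisfies $a_{j-2}\ge a_{j-1}<a_j$. Then, for every $j\ge 3$, an ascent at position $j$ (meaning $a_{j-1}<a_j$) forces an ascent at position $j-1$, so the set of ascent positions of $w^{\omega}$ has the form $\{2,3,\dots,j_0\}$ (possibly empty, possibly all of $\{2,3,\dots\}$). In the bounded case $w^{\omega}$ is weakly decreasing from some point on, hence eventually constant since $A$ is finite, hence constant since $w^{\omega}$ is purely periodic; this makes $w$ a power of a letter, contradicting primitivity (recall $m\ge 2$). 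In the unbounded case one gets $a_1<a_2<a_3<\cdots$ in the finite set $A$, absurd. So $P$ exists; and since whether $a_{j-2}\ge a_{j-1}<a_j$ holds depends only on $j$ modulo $m$, it must already hold for some $j$ among $3,\dots,m+2$, whence $P\le m+2\le 2m$.

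Now put $w'=a_{P+1}a_{P+2}\cdots a_{P+m}$, the conjugate of $w$ obtained by cutting $w^2$ just after its length-$P$ prefix. By periodicity the last three letters of $w'$ are $a_{P+m-2}a_{P+m-1}a_{P+m}=a_{P-2}a_{P-1}a_P$, which by the choice of $P$ is a dip of length exactly $3$; hence it lies in $D(A)$ and, by the preceding proposition, is synchronizing. Writing $w'=u\,d$ with $d=a_{P-2}a_{P-1}a_P$ and $u=a_{P+1}\cdots a_{P+m-3}\in A^*$ (with $u=\varepsilon$ when $m=3$), synchronization of $d$ gives $u\,d\in D(A)^*$, that is $w'\in D(A)^*$; this is the more precise assertion. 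For the passage to $S(A)^*$, factor $w'=d_1d_2\cdots d_k$ with $d_i\in D(A)$. Since $\sum_i|d_i|=m$ is odd, some $d_i$ has odd length; let $i^*$ be the least such index and set $w''=d_{i^*}d_{i^*+1}\cdots d_k d_1\cdots d_{i^*-1}$, which is again a conjugate of $w$. Reading $w''$ from the left and opening a new block at each dip of odd length exhibits it as a product of words of $S(A)$ (an odd-length dip followed by even-length dips), because its first dip $d_{i^*}$ is odd; hence $w''\in S(A)^*$.

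The one genuinely delicate point is the existence of $P$ inside $w^2$ — equivalently, that a purely periodic word over a finite alphabet that is not constant must somewhere display a non-ascent immediately followed by an ascent; this is where primitivity of $w$ and finiteness of $A$ enter. Once $P$ is in hand, the rest is short: a single application of the synchronization property of length-$3$ dips gives $w'\in D(A)^*$, and the arithmetic remark that a word of odd length in $D(A)^*$ must contain a dip of odd length gives the conjugate in $S(A)^*$.
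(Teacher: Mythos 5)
Your proof is correct, and it rests on the same two pillars as the paper's: the synchronization property of dips of length at least $3$, and the observation that an odd-length word of $D(A)^*$ contains an odd dip before which one can rotate to land in $S(A)^*$. The differences are local but real. For the existence of the prefix $p$, the paper argues constructively: writing $w=uav$ with $a$ a maximal letter, the word $vua$ must contain an ascent, so $w^2$ has the explicit prefix $p=uaq$ ending with a dip of length at least $3$; you instead analyze the set of ascent positions of $w^{\omega}$ (if no position $j$ has $a_{j-2}\ge a_{j-1}<a_j$, the ascents form an initial segment or all of $\{2,3,\dots\}$, both impossible for a non-constant periodic word over a finite alphabet), and then use periodicity mod $m$ to bound $P\le m+2\le 2m$ — a contrapositive-plus-pigeonhole argument that buys the same conclusion with a bit more bookkeeping but no appeal to a maximal letter. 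More interestingly, for the main step the paper distinguishes two cases according to whether $p$ is a prefix of $w$ or $w$ is a prefix of $p$, and in the second case needs $p,wp\in D(A)^*$ together with stability of $D(A)^*$ under removal of the prefix $p$ (right-unitarity, since $D(A)$ is a prefix code); your observation that, by periodicity, the conjugate $w'=a_{P+1}\cdots a_{P+m}$ itself ends with the length-$3$ dip $a_{P-2}a_{P-1}a_P$ lets a single application of synchronization give $w'\in D(A)^*$ with no case split at all, which is a genuine streamlining of that part of the argument.
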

\begin{proof}
We first show that $w$ has a conjugate in $D(A)^*$.
 Let $p$ be the shortest prefix of $w^2$ which ends
with a dip of length at least $3$.

 To show the existence of $p$, we consider a factorization
$w=uav$ with $a$ the largest letter among the letters occurring in $w$.
The word $vua$ has at least some ascent (that is, a factor $bc$ with $b<c$),
since $a$ is its largest letter, and since $vua$ is of length at least
$3$ and not a power of $a$. Hence $vua$ has a prefix $q$ which is a dip of length at least $2$. 
Thus $w^2$ has the prefix $p=uaq$
which ends with a dip of length at least $3$.

Assume first that $p$ is a prefix of $w$. Set $w=ps$. Since
$p$ is synchronizing, we have $sp\in D(A)^*$, whence the conclusion.
Assume next that $w$ is a prefix of $p$. Set $p=wr$ and $w=rs$.
Since $p$ is synchronizing, we have $p,wp\in D(A)^*$. Since
$wp=wwr=wrsr=psr$, we have $sr\in D(A)^*$.

 We may now assume that $w\in D(A)^*$. Since $w$ has odd length,
at least one of the dips forming $w$ has odd length. The conjugate
starting before this dip is in $S(A)^*$.
\end{proof}

\begin{example}\label{exampleAbracadabra}
 We illustrate Proposition~\ref{propositionConjugate}
by finding the
conjugate of the word $w=abracadabra$ which is in  $S(A)^*$.

The shortest prefix of $w^2$ which ends with a dip of length at
least $3$ is $p=abrac$. The conjugate of $w$ starting after
$p$ is $ad\ ab\ raab\ rac$ which factorizes in dips as indicated.
Its conjugate $rac\ ad\ ab\ raab$ is in $S(A)$
\end{example}
 
\begin{proposition}\label{propositionOverlap}
 No word of $S(A)$ overlaps nontrivially the
product of two words in $S(A)$, in the sense that for $x,y,z\in S(A)$,
if $x=x_1x_2$ with $y,x_1$ (resp. $z,x_2$) comparable for
the suffix (resp. prefix) order, then $x_1$ or $x_2$ is empty.
\end{proposition}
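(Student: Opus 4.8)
The plan is to carry out everything in terms of the factorization of words into dips. Since $D(A)$ is a prefix code, every word of $D(A)^*$ has a unique factorization into dips, and any prefix of a word of $D(A)^*$ that itself lies in $D(A)^*$ is a product of its initial dips. I shall use five elementary facts: (1) a proper prefix of a dip is weakly decreasing, while a proper suffix of a dip of length $\ge 2$ is again a dip; (2) a dip has exactly one ascent, situated at its end, so if a dip equals $w_1ew_2$ with $e\in D(A)$ then $w_2=\varepsilon$; (3) the dip factorization of a superdip is a dip of odd length $\ge 3$ followed by dips of even length, and the dip factorization of a product of superdips is the concatenation of those of the factors, so the odd-length dips occurring in a word of $S(A)^*$ are precisely the first dips of its superdip factors; (4) appending a nonempty weakly decreasing word---in particular a single letter---to a word of $D(A)^*$ never gives a word of $D(A)^*$; (5) as shown above, a dip of length $\ge 3$ is synchronizing, hence prepending an arbitrary word to a superdip keeps the product in $D(A)^*$.

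Assume $x_1,x_2\neq\varepsilon$, and split into the four cases given by the two comparabilities. When $x_1$ is a suffix of $y$, the word $x$ (if $x_2$ is a prefix of $z$) or its prefix $x_1z$ (if $z$ is a prefix of $x_2$) occurs inside $yz$ straddling the $y$--$z$ junction; when $y$ is a suffix of $x_1$, the word $y$---and in the last case also $z$, hence $yz$---occurs inside $x$. Write $x=g_1\cdots g_t$ for the dip factorization of $x$; its first dip $g_1$ has length $\ge 3$, hence is synchronizing. Thus whenever $x$ occurs in $yz=uxv$, fact (5) gives $ug_1\in D(A)^*$, then $ux\in D(A)^*$, so $ug_1$ and $ux$ are products of initial dips $d_1\cdots d_i$ and $d_1\cdots d_j$ of $yz$, and $g_2,\dots,g_t$ equal $d_{i+1},\dots,d_{i+t-1}$; dually, when $z$ or $y$ occurs inside $x$, synchronizing-ness of its first dip puts that occurrence on the dip grid of $x$.

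The case ``$y$ a suffix of $x_1$ and $z$ a prefix of $x_2$'' is immediate: the alignment of dip factorizations makes the first dip of $z$, of odd length, one of $g_2,\dots,g_t$, all of even length. In the two mixed cases I locate where the relevant part ($x_1$ as a prefix of $x$, or $x_2$ as a suffix of $x$) meets the dip grid of $x$. If it falls on a dip boundary, that part is itself a superdip and the first dip of $z$ gets the wrong parity. If it falls strictly inside a dip and involves at least two letters of that dip, then $x_1$ ends with a weakly decreasing pair of letters, impossible since $x_1$ is a suffix of, or has as a suffix, the superdip $y$, and superdips end with an ascent. If exactly one letter is split off, a short analysis finishes it: either $x_1$ has the form $sc$ with $s$ a product of dips and $c$ a single letter while fact (5) forces $x_1\in D(A)^*$, contradicting (4), or a parity count shows the first dip of the dip factorization of $x_2$ to be of even length, contradicting that it must coincide with the odd first dip of $z$. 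The remaining case, ``$x_1$ a suffix of $y$ and $x_2$ a prefix of $z$'', is the laborious one: combining $ug_1=d_1\cdots d_i$ and $ux=d_1\cdots d_j$, the even lengths of $g_2,\dots,g_t$, the fact that the two odd dips of $yz$ start $y$ and $z$, and the inequalities $|u|<|y|<|ux|$ that express that $x$ straddles the junction, one is forced to $i=a+1$ where $a$ is the number of dips of $y$; fact (2) then yields $g_1=\mu f_1$ with $\mu$ a single letter and $f_1$ the odd-length first dip of $z$, so $|g_1|$ is even, contradicting that $g_1$ begins a superdip.

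I expect the only real difficulty to be the bookkeeping in the ``split dip'' and ``short part'' subcases just described; the two workhorses there are the parity count of fact (3) and the hanging-weakly-decreasing-tail obstruction of fact (4), with the dichotomy ``a weakly decreasing pair versus a terminal ascent'' taking care of the generic subcases.
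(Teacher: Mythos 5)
Your overall strategy is sound and rests on the same ingredients as the paper's proof: synchronization of dips of length at least $3$, uniqueness of the dip factorization over the prefix code $D(A)$, the parity pattern (one odd dip then even dips) of superdips, and the fact that a dip's unique ascent sits at its end. I checked your ``laborious'' case ($x_1$ a suffix of $y$, $x_2$ a prefix of $z$): the alignment $ug_1=d_1\cdots d_i$, $ux=d_1\cdots d_j$ with $j\ge a+1$ does force either an odd dip among $g_2,\dots,g_t$ (immediate contradiction) or $i=a+1$, and then $g_1=\mu f_1$ with $\mu$ a nonempty weakly decreasing suffix of $y$, hence a single letter because $y$ ends with an ascent, giving the parity contradiction; note the single-letter step comes from your fact (1) plus the terminal ascent of $y$, not from fact (2), which is what rules out $i\ge a+2$. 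The case $yz$ inside $x$ and the case $y$ a suffix of $x_1$, $x_2$ a prefix of $z$ also close as you say (in the latter, fact (5) puts the junction on the grid of $x$ outright). For comparison, the paper avoids the four-way split: after disposing of $|x_1|\le 2$, it shows in both suffix-comparability directions that $x_1\in D(A)^*$ (first dip of $x$ is a prefix of $x_1$ because $x_1$ ends with an ascent, then synchronization inside $y$), and concludes once from the parity of the first dip of $x_2$ versus that of $z$.

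There is, however, one sub-case where your written argument fails as stated: $x_1$ a proper suffix of $y$, $z$ a prefix of $x_2$, with the junction splitting off exactly one letter, say $x_1=g_1\cdots g_{p-1}c$ with $p\ge 2$ and $g_p=c\beta$. Here no superdip is a suffix of $x_1$, so fact (5) gives no information about $x_1$ and the branch ``fact (5) forces $x_1\in D(A)^*$, contradicting (4)'' does not apply; and the parity count can be inconclusive, since synchronization of $f_1$ (the first dip of $z$) only yields $cf_1=g_p\cdots g_q$, which is compatible with, e.g., $g_p=cf_1$ of even length, in which case $x_2=f_1g_{p+1}\cdots g_t$ has first dip $f_1$, odd, matching $z$ -- no contradiction. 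The configuration is still impossible, but the obstruction lives on the $y$ side, which your two finishes never invoke at this point: since $g_1$ has odd length at least $3$ it is synchronizing, so $ug_1\cdots g_{p-1}\in D(A)^*$ where $y=ux_1$, and then $y=(ug_1\cdots g_{p-1})c\in D(A)^*$ contradicts your fact (4) (a single leftover letter cannot be a nonempty product of dips). Equivalently, the paper's move works: $x_1$ ends with an ascent, so the first dip of $x$ is a prefix of $x_1$, and synchronization inside $y$ forces $x_1\in D(A)^*$, contradicting the assumption that the junction is off the grid. With that one sub-case repaired -- using only tools you already listed -- your proof is complete.
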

\begin{proof}

Let $a_1a_2\cdots a_k$ be the last dip of $y$ and let
$b_1b_2\cdots b_\ell$ be the first dip of $z$. Note that
since $z\in S(A)$, $\ell$ is odd. We assume that $x_1,x_2$
are nonempty.

Assume first that $x_1$ is a letter. Then $x_1=a_k$. If $a_k<b_1$, the
first dip of $x$ is $a_kb_1$ and has even length, which is impossible. 
Otherwise,
the first dip of $x$ is $a_kb_1b_2\cdots b_\ell$ and thus has also
even length, a contradiction.

Assume next that $x$ has length $2$. Then $x_1=a_{k-1}a_k$. But then the first dip of $x$
has length $2$, which is again impossible.

Thus $x_1$ has length at least $3$. We distinguish two cases.
\paragraph{Case 1.} Assume that $x_1$ is shorter than $y$. Set $y=ux_1$
(see Figure~\ref{figureSuperdip} on the left).
Since $x_1$ ends with 
$a_{k-1}a_k$ which is not an internal factor of $D(A)$, the first
dip of $x$ is a prefix of $x_1$. Set $x_1=ts$ where $t$ is the
first dip of $x$. Since $t$ has odd length, its length
is at least $3$. Since a dip of length at least $3$ is synchronizing,
and since $y=ux_1=uts$,
we have $ut,s\in D(A)^*$ and thus also $x_1\in D(A)^*$ since $t,s\in D(A)^*$.
 This implies
that $x_2\in D(A)^*$ a contradiction, since the first dip of
$x_2$ is equal to the first dip of $z$ which has  odd length
and since $x$ is a superdip.
\begin{figure}[hbt]
\centering
\gasset{AHnb=0,Nadjust=wh}
\begin{picture}(120,15)(-10,0)
\put(0,0){
\begin{picture}(60,20)
\node(y)(0,0){}\node(z)(30,0){}\node(zr)(50,0){}
\node(u)(0,7){}\node(t)(10,7){}\node(s)(15,7){}\node(sr)(30,7){}
\node(x1)(10,15){}\node(x2)(30,15){}\node(xr)(45,15){}

\drawedge(y,z){$y$}\drawedge(z,zr){$z$}
\drawedge(u,t){$u$}\drawedge(t,s){$t$}\drawedge(s,sr){$s$}
\drawedge(x1,x2){$x_1$}\drawedge(x2,xr){$x_2$}
\drawedge[dash={0.2 0.5}0](x1,t){}
\end{picture}
}
\put(60,0){
\begin{picture}(60,20)
\node(y)(10,0){}\node(z)(30,0){}\node(zr)(50,0){}
\node(u)(0,7){}\node(t)(10,7){}\node(s)(15,7){}\node(sr)(30,7){}
\node(x1)(0,15){}\node(x2)(30,15){}\node(xr)(45,15){}

\drawedge(y,z){$y$}\drawedge(z,zr){$z$}
\drawedge(u,t){$v$}\drawedge(t,s){$t$}\drawedge(s,sr){$s$}
\drawedge(x1,x2){$x_1$}\drawedge(x2,xr){$x_2$}
\drawedge[dash={0.2 0.5}0](y,t){}
\end{picture}
}
\end{picture}
\caption{The two cases}\label{figureSuperdip}
\end{figure}
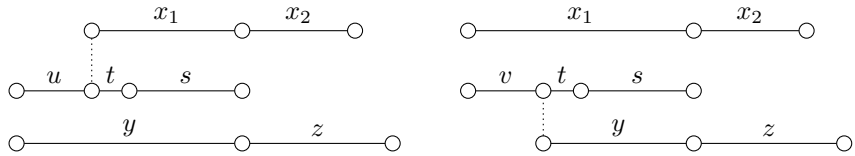
\paragraph{Case 2.} Assume now that $x_1$ is longer than $y$. Set $x_1=vy$
(see Figure~\ref{figureSuperdip} on the right).
Let $t$ be the first dip of $y$. Since $t$ is synchronizing
as in Case 1, we have $t,vt\in D(A)^*$. Set $y=ts$.
Since $y,t\in D(A)^*$, we have $s\in D(A)^*$. Thus $x_2\in D(A)^*$ a contradiction, as in Case 1.
\end{proof}

The following corollary shows that the code $S(A)$ satisfies Condition~\eqref{eqComma}.
\begin{corollary}\label{corollaryComma}
For any $x\in S(A)^+$ and $u,v\in A^*$ such that $uxv\in S(A)^*$,
one has $u,v\in S(A)^*$.
\end{corollary}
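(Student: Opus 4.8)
The plan is to fix an $S(A)$-factorization $uxv=s_1\cdots s_p$ (with $s_i\in S(A)$) and an $S(A)$-factorization $x=t_1\cdots t_m$ (with $t_j\in S(A)$, $m\ge1$), and to show that the two endpoints of the occurrence of $x$ inside $uxv$, namely the positions $|u|$ and $|u|+|x|$, are among the boundary positions $|s_1\cdots s_i|$ of the factorization of $uxv$. Once this is done, cutting $s_1\cdots s_p$ at those two points exhibits $u$ and $v$ as (possibly empty) products of the $s_i$, so $u,v\in S(A)^*$.

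First I would dispose of the interior seams of $x$. Suppose some seam $q=|u|+|t_1\cdots t_j|$ with $1\le j\le m-1$ is not a factorization point of $uxv$; since $0<q<|uxv|$ it then lies strictly inside some $s_c$, so $s_c=x_1x_2$ with $x_1$ and $x_2$ both nonempty, $x_1$ ending and $x_2$ beginning at position $q$. Then $x_1$ and $t_j$ both end at $q$, hence are comparable for the suffix order, and $x_2$ and $t_{j+1}$ both begin at $q$, hence are comparable for the prefix order. Proposition~\ref{propositionOverlap}, applied with the superdip $s_c$ straddling the product $t_jt_{j+1}$, forces $x_1$ or $x_2$ to be empty, a contradiction. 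Hence every interior seam of $x$ is a factorization point; in particular $ut_1\cdots t_j\in S(A)^*$ and $t_{j+1}\cdots t_mv\in S(A)^*$ for all such $j$. Taking $j=1$ and $j=m-1$, the general case reduces to the case $m=1$, applied to $u\,t_1\,\varepsilon$ and to $\varepsilon\,t_m\,v$. So it remains to treat a single superdip $x\in S(A)$.

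Write the dip factorization $x=d_0d_1\cdots d_r$ with $d_0$ of odd length $\ge3$ and $d_1,\dots,d_r$ of even length. Since $d_0$ has length at least $3$ it is synchronizing for the maximal prefix code $D(A)$, so from $uxv\in S(A)^*\subseteq D(A)^*$ we get $ud_0\in D(A)^*$, hence $ux=ud_0d_1\cdots d_r\in D(A)^*$; as $D(A)$ is a prefix code and $ux$ is a prefix of $uxv\in D(A)^*$, the dip factorization of $ux$ is an initial segment of that of $uxv$. Consequently $v\in D(A)^*$ and the right endpoint $|u|+|x|$ of $x$, as well as the interior dip-seams $|u|+|d_0\cdots d_i|$, are dip-boundaries of $uxv$. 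What is still missing is to lift this to the level of $S(A)$: one must check that $|u|+|x|$ is a \emph{superdip} boundary of $uxv$ (equivalently, that the dip of $uxv$ opening there has odd length) and, working symmetrically from the other side, that $u\in D(A)^*$ with $|u|$ a dip-boundary and in fact a superdip boundary. For this I would use the defining feature of superdips — a word of $S(A)^*$ is a product of blocks each consisting of one odd dip followed by even dips, so that the odd-length dips in its dip factorization are exactly those that open a superdip — and run a bookkeeping argument on the parities of the dip lengths along $uxv$, together if necessary with one more application of Proposition~\ref{propositionOverlap}.

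I expect this last step to be the main obstacle. The reduction to a single superdip and the right-hand dip alignment are routine, but the left endpoint of $x$ is the delicate point: a dip can legitimately be a proper suffix of a longer dip, so a priori the leading dip $d_0$ of $x$ could be absorbed into a longer dip of $uxv$ reaching back into $u$. Showing that this cannot happen when $uxv\in S(A)^*$ (or that it does no harm) is exactly where the odd-length hypothesis on the leading dip of every superdip, and hence the role of odd $n$, has to be used carefully.
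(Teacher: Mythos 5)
Your two completed reductions are sound: the interior seams of the $S(A)$-factorization of $x$ must be factorization points of $uxv$ by Proposition~\ref{propositionOverlap}, and, for a single superdip $x=d_0d_1\cdots d_r$, the synchronization of the odd leading dip $d_0$ (length at least $3$) plus left unitarity of $D(A)^*$ does give $ud_0\in D(A)^*$, $v\in D(A)^*$, and the right endpoint of $x$ as a dip boundary of $uxv$. But the step you defer to ``bookkeeping'' --- upgrading dip boundaries to superdip boundaries at the right endpoint, and aligning the left endpoint at all --- is not a technicality you can postpone: it is the whole content, and the claims you would need are simply not consequences of the stated hypotheses. Over $A=\{a,b,c\}$ with $a<b<c$: for the right endpoint take $u=aac$, $x=aab$, $v=ab$; then $uxv=aac\cdot aabab\in S(A)^*$ and $x\in S(A)$, the position $|u|+|x|$ is a dip boundary exactly as your argument predicts, but the dip opening there is the even dip $ab$, so $v=ab\notin S(A)^*$. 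For the left endpoint take $u=aa$, $x=cab$, $v=aab$; then $uxv=aacab\cdot aab\in S(A)^*$ and $x\in S(A)$, but the dip factorization of $uxv$ is $aac\cdot ab\cdot aab$: the leading dip of $x$ is absorbed into a dip reaching back into $u$ (precisely the configuration you flagged as delicate), $|u|$ is not even a dip boundary, and $u=aa\notin D(A)^*\supseteq S(A)^*$.

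So the ``main obstacle'' you identify is a genuine gap, and no parity bookkeeping can close it: in these examples (with $u,v$ nonempty) the conclusion you are aiming for fails, so any complete argument must use more than the bare hypothesis that $x\in S(A)^+$ occurs as a factor of a word of $S(A)^*$ --- in the paper the property is needed only in the overlap situation of Proposition~\ref{propositionEastman}, where the occurrence of $x$ straddles the boundary between two superdips of $uxv$. Note also that your route differs from the paper's: the paper argues on the superdip factorization $uxv=y_1\cdots y_\ell$, reduces to $u$ a prefix of $y_1$, excludes $ux_1$ a proper prefix of $y_1$, and then invokes Proposition~\ref{propositionOverlap} to force $u=y_1$; your left-endpoint configuration, where the overlap of $x_1$ with $y_1y_2$ is trivial and one only gets $ux_1=y_1$ with $u\notin S(A)^*$, is exactly the case any argument of either kind has to confront, and it shows the deferred step cannot be carried out in the generality in which you (and the statement) pose it.
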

\begin{proof}
Set $x=x_1\cdots x_k$ and
$uxv=y_1\cdots y_\ell$ with $y_i\in S(A)$. We assume that
$u,v$ are nonempty. We may assume that $u$
is a prefix of $y_1$ since otherwise we may simplify by $y_1$
on the left. Next, $ux_1$ cannot be a proper prefix of $y_1$
because we would have $u,v\in D(A)^*$ and all dips of $x_1$ would
be even, a contradiction. Thus $x$ overlaps $y_1y_2$ which implies
that $u=y_1$.
\end{proof}
\section{Eastman algorithm}\label{sectionEastman}
 Consider now the radix order on $A^*$,  defined by $u<v$ if $|u|<|v|$
or if   $|u|=|v|$ and $u$ precedes $v$ in lexicographic order. 

 Let $(S_n(A))_{n\ge 0}$ be the sequence of 
maximal  codes on the alphabet $A$ obtained as follows. We start with $S_0(A)=A$. For $n\ge 1$,
let $D_{n}(A)=D(S_{n-1}(A))$ and $S_n(A)=S(S_{n-1}(A))$ where $D,S$ are defined in the previous exercise with
$S_{n-1}(A)$ instead of $A$ as an alphabet, using the order on $S_{n-1}(A)$ induced by the radix
order on $A^*$.

Note that each $S_n(A)$ is formed of words of odd length. Indeed,
this is true for $n=0$ and assuming that it is true for $n-1$, we obtain
the property for $D_n(A)$ using the following lemma whose proof is left to the reader. A \emph{graded alphabet} is a set $A$ with a map $\deg:A\rightarrow \N$
associating to each letter its degree.
The degree of a word is the sum of the degrees of its letters.
\begin{lemma}
Let $A$ be a graded alphabet such that $\deg(a)$ is odd for every $a\in A$.
Then every word of odd length has odd degree.
\end{lemma}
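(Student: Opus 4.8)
The plan is to prove the statement by a direct counting argument on the contributions of individual letters, exploiting the hypothesis that every letter has odd degree. First I would let $w = a_1 a_2 \cdots a_n$ be a word of odd length $n$ over the graded alphabet $A$, so that by definition $\deg(w) = \sum_{i=1}^{n} \deg(a_i)$. Since each $\deg(a_i)$ is odd, I want to track the parity of this sum: a sum of $n$ odd integers has the same parity as $n$ itself, because each odd summand contributes $1$ modulo $2$.

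The key step is therefore the elementary congruence $\deg(w) \equiv \sum_{i=1}^{n} 1 = n \pmod{2}$, which follows since $\deg(a_i) \equiv 1 \pmod 2$ for each $i$. As $n$ is odd by assumption, we conclude $\deg(w) \equiv 1 \pmod 2$, i.e.\ $\deg(w)$ is odd. One can equivalently run this as a short induction on the length: the empty word and single letters are handled at once (a single letter has odd degree by hypothesis), and passing from a word of length $m$ to one of length $m+1$ flips the parity of both the length and, by adding one odd degree, the parity of the degree, so the invariant ``length and degree have the same parity'' is preserved; restricting to odd length then gives odd degree.

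There is essentially no obstacle here — the statement is a parity bookkeeping fact — so the only thing to be careful about is the edge case of very short words and making sure the phrasing about ``odd length'' is used consistently (in the application, the words in question have length at least one, indeed at least three, so degenerate cases do not even arise). The proof is left to the reader in the excerpt precisely because it reduces to the observation that $\mathbb{Z}/2\mathbb{Z}$ is additive and a sum of $n$ copies of $1$ equals $n$; I would simply record the one-line congruence and conclude.
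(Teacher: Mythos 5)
Your proof is correct: the parity computation $\deg(w)\equiv n \pmod 2$ for a word of odd length $n$ with each $\deg(a_i)$ odd is exactly the elementary argument the paper intends when it leaves the proof to the reader, and no further detail is needed.
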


The elements of $D_n(A)$
are called $n$-dips
and those of $S_n(A)$ are called $n$-superdips. Recognizing if a word $w$
is in $D_n(A)^*$ or $S_n(A)ç*$ can be done operating for increasing values of $i=1,\ldots,n$.
Assume that $w\in S_{i-1}(A)^*$. Then one may use a left to right scan of $w$
to write $w=xp$ with $x\in D_{i}(A)^*$ and $p$ a proper prefix of 
$D_{i}(A)^*$. Set $x=x_1\cdots x_k$ with $x_j\in D_i(A)$. Selecting
the blocks beginning with an odd $i$-dip, we obtain
$x=qy_1\cdots y_\ell$ with $y_j\in S_i(A)$. Then $w\in S_i(A)^*$
if and only if $p=q=\varepsilon$.

 Consider the following algorithm to compute a conjugate of a
primitive
word $w\in A^*$ of odd length $m\ge 3$ which is in $S_n(A)$. For successive values
of $n\ge 1$, we perform the following steps.
\begin{enumerate}
\item Let $p$ be the shortest prefix of $w^2$ which ends
with an $n$-dip of length at least $3$ on the alphabet $S_{n-1}(A)$
(such a prefix exists because
the length of $w$ on the alphabet $S_{n-1}(A)$ is at least $3$).  Replace $w$ by its conjugate
starting after $p$.
Now $w\in D_n(A)^*$ (see below).
\item Let $q$ be the first dip of odd length of $w$ (it exists because
$w$ has odd length). 
Replace $w$ by its conjugate starting before $q$. Now $w\in S_n(A)^*$.
\end{enumerate}
We stop when $w$ is in $S_n(A)$.
It follows from  Proposition~\ref{propositionConjugate},
 since $D_n(A)=D(S_{n-1}(A))$,
that the conjugate of
$w$ chosen as in step 1 of the algorithm is in $D_ n(A)^*$.
\begin{example}
 We perform the above algoritm on the word $abracadabra$ (already considered in Example~\ref{exampleAbracadabra}).

 Write periodically the word $abracadabra$ and factorize it in dips.
We obtain
\begin{eqnarray*}
abracadabra\ abracadabra\cdots&=&ab\ rac\ ad\ ab\ raab\ rac\ ad\ ab\ raab\cdots
\end{eqnarray*}
We thus replace $abracadabra$ by its conjugate 
\begin{eqnarray*}
racadabraab&=&rac\ ad\ ab\ raab.
\end{eqnarray*}
 Since
it is a superdip (a dip of length $3$ followed
by dips of lengths $2,2,4$), the algorithm stops.

\end{example}

 We conclude from the above that one has the following result.
\begin{proposition}\label{propositionEastman}
 For every odd integer $m$,
the set of words of length $m$ which belongs to some $S_n(A)$
 is a comma-free code which meets all conjugacy classes
of primitive words of length $m$.
\end{proposition}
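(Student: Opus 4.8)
The plan is to prove Proposition~\ref{propositionEastman} in three stages: first show that the sets $S_n(A)\cap A^m$ for fixed odd $m$ stabilize and partition the primitive conjugacy classes; second, show that their union (equivalently, the stable value) is a comma-free code; third, deduce that it meets every conjugacy class of primitive words of length $m$.

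\paragraph{Stabilization and disjointness.} First I would observe that, by the lemma on graded alphabets together with the remark that each $S_n(A)$ consists of words of odd length, passing from $S_{n-1}(A)$ to $S_n(A)$ strictly increases the minimal length of a non-letter element (a word of $S_n(A)$ that is not a single letter of $S_{n-1}(A)$ is a product of at least three letters of $S_{n-1}(A)$, hence has $A$-length at least $3\cdot(\text{min length in }S_{n-1})$). Consequently, for a fixed $m$, once $n$ is large enough every word of $S_n(A)$ of $A$-length $m$ is actually a letter of $S_{n-1}(A)$, i.e. already lies in $S_{n-1}(A)$; so $S_n(A)\cap A^m$ is eventually constant, call this set $X_m$. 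Moreover the construction is \emph{iterated decomposition}: each element of $S_{n-1}(A)$ either survives into $S_n(A)$ (as a length-one word over $S_{n-1}(A)$) or is ``consumed'' as a proper factor of longer $n$-superdips and never reappears. This gives that the sets $\{S_n(A)\setminus S_{n+1}(A)\}$ together with $\bigcap_n S_n(A)$ partition $S_0(A)^+ = A^+$, so every word of $A^m$ occurs in exactly the blocks of exactly one $S_n(A)$; in particular $X_m = S_N(A)\cap A^m$ for $N$ large and distinct elements of $X_m$ are non-conjugate because $S_N(A)$ is a code (Proposition: $S(A)$ is a code, applied along the tower) and contains only primitive words.

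\paragraph{Comma-freeness.} Next I would verify that $X_m$ is a comma-free code of length $m$, using the characterization~\eqref{eqComma}: it suffices to show that for $x\in X_m^+$ and $u,v\in A^*$, $uxv\in X_m^*$ implies $u,v\in X_m^*$. The key point is that $X_m = S_N(A)\cap A^m$ and $S_N(A)$ itself satisfies Condition~\eqref{eqComma} — this is exactly Corollary~\ref{corollaryComma} applied not to $D(A)$ but to the iterated alphabet, i.e.\ inductively: $S_n(A) = S(S_{n-1}(A))$ and Corollary~\ref{corollaryComma} says $S(B)$ satisfies~\eqref{eqComma} over $B$ for any ordered alphabet $B$ with at least two letters; composing the synchronization properties up the tower shows $uxv\in S_N(A)^*$ forces $u,v\in S_N(A)^*$. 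Then restricting to words of length $m$: if $u,v\in S_N(A)^*$ and all of $u$, $x$, $v$ sit inside a factorization of $uxv$ into words of $X_m\subset S_N(A)$, the code property of $S_N(A)$ forces $u$ and $v$ to be products of those length-$m$ blocks, hence $u,v\in X_m^*$. The main obstacle here is bookkeeping the induction cleanly — one must check that Corollary~\ref{corollaryComma}'s hypothesis (alphabet of size $\ge 2$) persists up the tower and that ``synchronizing'' behaves well under the change of alphabet — but no genuinely new idea is needed beyond iterating the results of Section~\ref{sectionEastman}.

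\paragraph{Meeting every conjugacy class.} Finally, for surjectivity onto conjugacy classes: given a primitive word $w$ of odd length $m\ge 3$, run the algorithm described just before the proposition. By Proposition~\ref{propositionConjugate} applied with alphabet $S_{n-1}(A)$, step~1 at stage $n$ produces a conjugate of the current word lying in $D_n(A)^*$, and step~2 produces a conjugate in $S_n(A)^*$; since each stage strictly increases the minimal $A$-length of the relevant alphabet letters while $m$ is fixed, after finitely many stages the word has become a single letter of $S_{n-1}(A)$, i.e.\ an element of $S_N(A)\cap A^m = X_m$. Thus the conjugacy class of $w$ meets $X_m$. Combined with the disjointness from the first stage, $X_m$ is a system of representatives of the primitive conjugacy classes of length $m$, which is the assertion of Theorem~\ref{theoremEastman} as well; the proposition follows.
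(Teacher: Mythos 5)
Your proposal rests on a ``stabilization'' claim that is false, and the error is not cosmetic: it removes exactly the difficulty that the paper's proof is designed to handle. An element of $S_n(A)=S(S_{n-1}(A))$ is a superdip over the alphabet $S_{n-1}(A)$, hence has length at least $3$ over that alphabet (a dip has length $\ge 2$, an odd dip length $\ge 3$); in particular no letter of $S_{n-1}(A)$ ever ``survives into'' $S_n(A)$, the sets $S_{n-1}(A)$ and $S_n(A)$ are disjoint, and $S_n(A)\cap A^m=\emptyset$ as soon as $3^n>m$. So there is no nonempty stable value $X_m=S_N(A)\cap A^m$: the code in the statement is genuinely the union $\bigcup_n S_n(A)\cap A^m$, which mixes levels. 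For instance with $m=9$ and $a<b$, the word $aab\,ab\,ab\,ab$ lies in $S_1(A)$ while $aab\,aab\,aac$ lies in $S_2(A)\setminus S_1(A)$, and both belong to the length-$9$ code. (Your auxiliary claim that the sets $S_n(A)\setminus S_{n+1}(A)$ together with $\bigcap_n S_n(A)$ partition $A^+$ is likewise untenable.) Because of this, your comma-freeness argument, which applies Corollary~\ref{corollaryComma} to a single $S_N(A)$ and then restricts to length $m$, never addresses the real case: an overlap between $x$, $y$, $z$ of length $m$ lying in \emph{different} sets $S_n(A)$.

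This is precisely what the paper's proof handles: it proves by induction on $|x|+|y|+|z|$ that no word $x$ of $U=\bigcup_n S_n(A)$ overlaps nontrivially a product $yz$ with $y,z\in U$. If none of $x,y,z$ is a letter they all lie in $S_1(A)^*$, Corollary~\ref{corollaryComma} (for the single level $S_1(A)=S(A)$) yields $u,v\in S_1(A)^*$ and hence $x_1,x_2\in S_1(A)^*$, and then one re-encodes $x,y,z$ over the new alphabet $S_1(A)$, using $S_n(A)=S_{n-1}(S_1(A))$, to apply the induction hypothesis to strictly shorter words. Your surjectivity paragraph is essentially the paper's (run the two-step algorithm and invoke Proposition~\ref{propositionConjugate} at each level), apart from the same confusion about the terminal word being ``a letter of $S_{n-1}(A)$'': termination comes from the fact that the length of $w$ over $S_n(A)$ is at most $m/3^n$, so the algorithm stops with $w\in S_n(A)$ for some $n$. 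To repair your proof you would have to abandon the stabilization step and supply an argument for cross-level overlaps, which is the missing idea.
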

\begin{proof}
Set $U=\cup_{n\ge 0}S_n(A)$.
 The set $U\cap A^m$
meets every conjugacy class of of primitive words
of length $m$ because
the algorithm above gives this conjugate. To verify
that it is comma-free, we prove by induction on 
$|x|+|y|+|z|$ that a word $x$ in $U$ does not overlap
nontrivially a product $yz$ of words $y,z\in U$
in the sense that if $x=x_1x_2$ with $x_1$ a proper suffix
of $y$ and $x_2$ a proper prefix of $z$, then $x_1$ or $x_2$
is empty.

The property is true if one of $x,y,z$ is a letter. 

Otherwise, we have $x,y,z\in S_1(A)^*$.
Set $y=ux_1$ and $z=x_2v$. By Corollary \ref{corollaryComma}, 
we have $u,v\in S_1(A)^*$ and thus also
$x_1,x_2\in S_1(A)^*$. Since $S_n(A)=S_{n-1}(S_1(A))$
for all $n\ge 1$,
we may apply the induction hypothesis to the words $x',y',z'$
obtained from $x,y,z$ by considering $S_1(A)$ as a new
alphabet, obtaining the conclusion.
\end{proof}

The following example shows that the comma-free codes obtained
by Eastman algorithm are not the same as those obtained by Scholtz algorithm.
\begin{example}\label{exampleDiff}
Let $A=\{a,b,c\}$, the word
$a^{12}ca^{10}ca^2ca^2ba^2c$ is in $S_2(A)$ since all words
$a^{12}c,a^{10}c,a^2c,a^2b$ are odd length dips and
$a^{12}c\ge a^{10}c\ge a^2c\ge a^2b<a^2c$. Thus it belongs
to the comma-free code of length $33$ obtained by Eastman algorithm.
But it is not in the
code obtained by Scholtz algorithm
which contains the conjugate $a^{10}ca^2ca^2ba^2ca^{12}c$.
\end{example}

\section{A new Lazard set}\label{sectionNewLazard}

The aim of this section is to show that the comma-free
code obtained by Eastman algorithm is a Lazard set
and thus can be obtained by
an elimination method, as in Scholtz algorithm,
although using a different order (which does
not respect length and thus in not defined by a Hall sequence).

Let  $D_n(A)$ be the set of  $n$-dips
as defined in the previous section and let $P_n(A)$ denote
the set of proper prefixes of $D_n(A)$. One has
\begin{displaymath}
A^*\supset D_1(A)^+P_1(A)\supset\cdots\supset D_n(A)^+P_n(A)\supset\cdots
\end{displaymath}
For $w\in A^*$,
the \emph{index} of $w$, denoted $\Index(w)$,
 is the largest integer $n$ such that $w\in D_n(A)^+P_n(A)$.

\begin{example}
The index of $aabaa$ is $1$.
\end{example}

Note that if $x\in S_n(A)$, then $\Index(x)=n$. Indeed, $x\in D_n(A)^+$
and $x$ cannot have a prefix in $D_{n+1}(A)$ since the words
of $D_{n+1}(A)$ have length at least $2$ on the alphabet $S_n(A)$.

We define  the following order on $A^*$. For $x,y\in A^*$,
 we define $x\prec y$ if
\begin{enumerate}
\item[(i)] $x$ has odd length and $y$ has even length, or
\item[(ii)] the lengths of $x$ and $y$ the same parity and
\begin{enumerate}
\item $\Index(x)<\Index(y)$ or 
\item $\Index(x)=\Index(y)$ and $x<y$ for the radix order.
\end{enumerate}
\end{enumerate}
With the objective of applying Proposition~\ref{propositionLazard}, we prove the following
property of this order.
\begin{proposition}
For $x,y\in A^*$, if $x\prec y$, then $x\prec xy$.
\end{proposition}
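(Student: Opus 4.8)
The plan is to check the defining property $x \prec y \Rightarrow x \prec xy$ by going through the cases of the definition of $\prec$, controlling two quantities: the parity of $|xy|$ and the index $\Index(xy)$. First I would dispose of the parity bookkeeping. By the lemma on graded alphabets (or directly), every word of $S_{n-1}(A)$ has odd length, hence every $n$-dip has odd length as a word over $A$ whenever... more simply: we only need that $|xy| = |x| + |y|$, so the parity of $|xy|$ is the sum of the parities of $|x|$ and $|y|$. In case (i), $|x|$ is odd and $|y|$ is even, so $|xy|$ is odd; then $x$ (odd) and $xy$ (odd) have the same parity, so I must instead verify clause (ii) for the pair $(x, xy)$. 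In case (ii), $|x|$ and $|y|$ have the same parity, so $|y|$ is even, hence $|xy|$ has the same parity as $|x|$; again I land in clause (ii) and must compare indices, then radix order.

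So in every case the real work is clause (ii) applied to $(x, xy)$: I must show either $\Index(x) < \Index(xy)$, or $\Index(x) = \Index(xy)$ together with $x < xy$ in radix order. The radix part is immediate: $x$ is a proper prefix of $xy$ (as $y \neq \varepsilon$; note $y$ has even length in cases where it matters, in particular $y \neq \varepsilon$), and a word is radix-smaller than any word properly extending it, so $x < xy$. Hence it suffices to prove the monotonicity statement
\[
\Index(x) \le \Index(xy) \quad\text{for all } x,y \in A^*.
\]
Indeed, combined with $x < xy$ in radix order this yields clause (ii) regardless of whether the index strictly increases or stays equal, and this single inequality handles cases (i), (ii)(a) and (ii)(b) uniformly.

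The main obstacle, and the heart of the argument, is therefore establishing $\Index(x) \le \Index(xy)$. Write $n = \Index(x)$, so $x \in D_n(A)^+ P_n(A)$, say $x = x_1\cdots x_k p$ with $x_j \in D_n(A)$ and $p \in P_n(A)$ a proper prefix of some $n$-dip. I want to show $xy \in D_n(A)^+ P_n(A)$ as well. The point is that $D_n(A)$ is a maximal prefix code over the alphabet $S_{n-1}(A)$ (Proposition on $D(A)$), so reading $xy$ left-to-right one factors it uniquely as $xy = z_1\cdots z_m q$ with $z_i \in D_n(A)$ and $q$ a proper prefix of $D_n(A)$ — provided $xy$ itself lies in $S_{n-1}(A)^+$, which holds because $x$ does (it is in $D_n(A)^+P_n(A) \subset S_{n-1}(A)^*$) and... here I must be slightly careful: I should first argue that $\Index(x) \ge 1$ forces enough structure, or alternatively prove the inequality by induction on $n$, using at each level that membership in $D_n(A)^+ P_n(A)$ is determined by a greedy left-to-right dip-factorization over the alphabet $S_{n-1}(A)$, and that prepending nothing / appending letters on the right cannot destroy a prefix in $D_n(A)^+$ already present. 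Concretely: since $D_n(A)$ is a prefix code, the factorization of $x$ into $n$-dips is a prefix of the greedy factorization of $xy$, so $xy$ still has its initial segment $x_1 \cdots x_k$ consumed by $n$-dips and what remains, $p y$, is parsed greedily over $D_n(A)$; whatever happens, $xy$ retains a prefix in $D_n(A)^+$ followed by a proper prefix of $D_n(A)$ (it cannot "lose" the $n$-dips already read because $D_n(A)$ is prefix). Hence $xy \in D_n(A)^+ P_n(A)$, so $\Index(xy) \ge n = \Index(x)$, completing the proof.
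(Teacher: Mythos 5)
Your overall strategy coincides with the paper's: the heart of both arguments is the inequality $\Index(x)\le\Index(xy)$, obtained by writing $x=zp$ with $z\in D_n(A)^+$, $p\in P_n(A)$ and using that $D_n(A)$ is a maximal prefix code so that the parse survives appending $y$, hence $xy\in D_n(A)^+P_n(A)$; one then concludes with $x<xy$ in the radix order because $x$ is shorter (a proper prefix) of $xy$. Your justification of that key step is at the same level of detail as the paper's one-line appeal to maximality of $D_n(A)$, so there is no methodological novelty there.

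There is, however, a concrete error in your parity bookkeeping. In case (ii) you write that ``$|x|$ and $|y|$ have the same parity, so $|y|$ is even''; this does not follow, since both lengths may be odd. In that subcase $|xy|$ is even while $|x|$ is odd, so your blanket reduction ``in every case the real work is clause (ii) applied to $(x,xy)$'' fails: clause (ii) cannot even be invoked for two words whose lengths have different parities, and no comparison of indices or radix order is relevant. The missing subcase is saved by clause (i) alone --- a word of odd length precedes every word of even length --- which is precisely the paper's short second case (``if $y$ has odd length, then $x$ is also of odd length and $xy$ has even length, which implies $x\prec xy$''). So the gap is trivially repairable, but as written your case analysis asserts something false and omits the one case where the argument must change. (A shared tacit assumption, in your text and the paper's alike, is that $y\ne\varepsilon$; for $y=\varepsilon$ the statement itself degenerates.)
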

\begin{proof}
Assume first that  $y$ has even length.
Then the lengths of $x,xy$ 
have the same parity. Assume that $\Index(x)=n$. Set $x=zp$
with $z\in D_n(A)^+$ and $p\in P_n(A)$. Since $D_n(A)$ is a maximal
prefix code, we have $py\in D_n(A)^*P_n(A)$. Thus $xy\in D_n(A)^+P_n(A)$
showing that $\Index (xy)\ge n$. 
If $\Index(xy)>n$, then $x\prec xy$. Otherwise, $x$ is shorter
than $xy$ and thus $x\prec xy$. 

Next, if  $y$ has odd length, then $x$ is also of odd length
and $xy$ has even length, which implies $x\prec xy$.
\end{proof}

Let $Z$ be the Lazard set corresponding to the order $\prec$, (which exists
and is unique by Proposition~\ref{propositionLazard}).

Let $N\ge 1$ be an integer and
set $Z\cap A^{\le N}=\{z_1,z_2,\ldots, z_M\}$ with $z_1\prec z_2\prec \ldots\prec z_M$.
Let
$Z_1,\ldots,Z_{M+1}$ be the sequence of maximal prefix codes 
defined, starting with $Z_1=A$, by  $Z_{n+1}=z_n^*(Z_n\setminus z_n)$.

The \emph{standard factorization} of a word $z$ of $Z\cap A^{\le N}$
which is not a letter
is the pair $(z_n,y)$ such that $z=z_ny$ with $1\le n<M$
and $y\in z_n^*(Z_n\setminus z_n)$. In this way
the Hall tree corresponding to $z$ is $\pi(z)=(\pi(z_n),\pi(y))$.


We need another definition. For some $k\ge 0$,
an element $y\in Z$
is \emph{consistently}  in $D_{k+1}(A)$ if  $y=y_1\cdots y_s$
with $y_i\in S_k(A)\cap Z$, $y_1\ge \ldots\ge y_{s-1}<y_s$,
and  $\pi(y)=(\pi(y_1),\pi(y_2\cdots y_s))$. Thus,
when $y$ is consistently in $D_{k+1}(A)$, its
 standard factorization is given by its
$(k+1)$-dip structure.

Note that any element of $D_1(A)$ is in $Z$ (for large enough $N$)
and is consistently in $D_1(A)$. Indeed, if $y=a_1\cdots a_k$
with $a_i\in A$ and $a_1\ge \ldots\ge a_{k-1}<a_k$,
then $\pi(y)=(a_1\ldots(a_{k-1},a_k))$.

Note also that there can be elements of $Z\cap D_{k+1}(A)$ which are not consistently
in $D_{k+1}(A)$, as shown in the example below.
\begin{example}\label{exampleNotConsistent}
The word $z=aadaabaacab$ is in $D_2(A)$ since
$aad,aab,aacab\in S_1(A)$ and
$aad>aab<aacab$. We have also $z\in Z$ with the
decomposition $\pi(y)=((\pi(aad),(\pi(aab),\pi(aac))),\pi(ab))$. But the two decompositions
do not coincide (see Figure~\ref{figureNonConsistent}
where the tree is only partially developped).
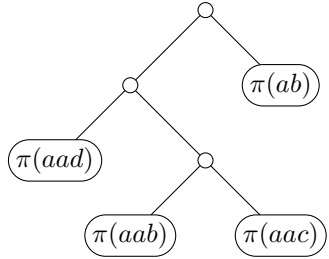
\begin{figure}[hbt]
\centering
\gasset{Nadjust=wh,AHnb=0}
\begin{picture}(50,30)
\node(1)(30,30){}
\node(l)(20,20){}\node(r)(40,20){$\pi(ab)$}
\node(ll)(10,10){$\pi(aad)$}\node(lr)(30,10){}
\node(lrl)(20,0){$\pi(aab)$}\node(lrr)(40,0){$\pi(aac)$}

\drawedge(1,l){}\drawedge(1,r){}
\drawedge(l,ll){}\drawedge(l,lr){}
\drawedge(lr,lrl){}\drawedge(lr,lrr){}
\end{picture}
\caption{The tree $\pi(y)$.}\label{figureNonConsistent}
\end{figure}
\end{example}
We will use two lemmas concerning these notions. The first
one describes a situation where the structure of the Lazard set
coincides with that of the dip and superdip structure.
\begin{lemma}\label{lemma2}
Let $k\ge 0$ and $n\ge 1$ be such that $x\in S_k(A)$ and $y\in  Z_{n+1}$
with $|y|$ odd.
If $y$ is consistently in $D_{k+1}(A)$, then $z_ny$ is in consistently in $D_{k+1}(A)$.
\end{lemma}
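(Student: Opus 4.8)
The plan is to unpack the hypothesis that $y$ is consistently in $D_{k+1}(A)$: write $y=y_1\cdots y_s$ with $y_i\in S_k(A)\cap Z$, with $y_1\ge\cdots\ge y_{s-1}<y_s$ for the radix order restricted to $S_k(A)$, and $\pi(y)=(\pi(y_1),\pi(y_2\cdots y_s))$ (note that $s$ is odd, since the elements of $S_k(A)$ have odd length and $|y|$ is odd, so in particular $s\ge 3$ and $\pi(y_2\cdots y_s)$ is a genuine subtree). The first step is to locate $y_1$ inside the Lazard set. Since a subtree of a Hall tree is again a Hall tree, $\pi(y_2\cdots y_s)$ is a Hall tree and $y_2\cdots y_s\in Z$, so the equality $\pi(y)=(\pi(y_1),\pi(y_2\cdots y_s))$ is precisely the standard factorization of $y$; hence $y_1=z_j$ where $j$ is the least index with $y\in Z_{j+1}$, and since $y\in Z_{n+1}$ by hypothesis we get $j\le n$. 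Now $z_j$ and $z_n$ both lie in $S_k(A)$, so they have the same parity and the same index, which means $\prec$ between them is just the radix order; thus $j\le n$ forces $y_1=z_j\le z_n$ in the radix order on $S_k(A)$.

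Next I would verify two things about $z_ny$. First, it is a single $(k+1)$-dip: since $S_k(A)$ is a code and $z_n,y_1,\dots,y_s\in S_k(A)$, the word $z_ny=z_ny_1\cdots y_s$ is the $S_k(A)$-factorization of $z_ny$, and the chain $z_n\ge y_1\ge\cdots\ge y_{s-1}<y_s$ shows it belongs to $D_{k+1}(A)=D(S_k(A))$ with component sequence $(z_n,y_1,\dots,y_s)$. Second, the standard factorization of $z_ny$ in $Z$ is $(z_n,y)$, i.e. $\pi(z_ny)=(\pi(z_n),\pi(y))$; together with the hypothesis $z_n\in S_k(A)\cap Z$ this is exactly the assertion that $z_ny$ is consistently in $D_{k+1}(A)$. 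For this, from $y\in Z_{n+1}=z_n^*(Z_n\setminus z_n)$ I get $z_ny\in z_n^+(Z_n\setminus z_n)\subseteq Z_{n+1}$; and $z_ny\notin Z_i$ for $i\le n$, because $z_ny$ has the proper prefix $z_n\in Z_n$ and $Z_n$ is a prefix code (so $z_ny\notin Z_n$), while if $z_ny\in Z_i$ with $i<n$ then $z_ny\ne z_i$ (as $z_i\preceq z_n\prec z_ny$, the last step by $z_n\prec y$ and the order property $x\prec y\Rightarrow x\prec xy$), so $z_ny\in Z_i\setminus z_i\subseteq Z_{i+1}$, and iterating forces the contradiction $z_ny\in Z_n$. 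Thus $n$ is the least index with $z_ny\in Z_{n+1}$. It remains only to know $z_ny\in Z$: by the first paragraph $\pi(y)=(\pi(z_j),\cdot)$ with $\pi(z_j)=\pi(y_1)\le\pi(z_n)$, and $\pi(z_n)<\pi(y)$ since $z_n\prec y$, so $(\pi(z_n),\pi(y))$ satisfies the defining condition of the Hall set $\pi(Z)$, hence lies in it, so its foliage $z_ny$ lies in $Z$. Combining, the standard factorization of $z_ny$ is $(z_n,y)$, which completes the argument.

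The step I expect to be the main obstacle is the identification $y_1=z_j$ with $j\le n$ in the first paragraph: this is the bridge between the purely combinatorial $(k+1)$-dip structure of $y$ and the recursive structure of the Lazard set, and it is precisely what makes the Hall inequality $\pi(y_1)\le\pi(z_n)$ available — which in turn is what guarantees both that $z_ny$ is a Hall word at all and that the new component $z_n$ sits correctly in front of the dip. Once this is in place, the remainder is routine bookkeeping with the prefix codes $Z_i$ and the definitions of dip and standard factorization.
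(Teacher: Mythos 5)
Your proof is correct and follows essentially the same route as the paper's: identify $y_1$ with some $z_j$, $j\le n$, via the standard factorization of $y$, deduce $z_n\ge y_1$ because $\prec$ restricted to $S_k(A)$ is the radix order, obtain the dip chain $z_n\ge y_1\ge\cdots\ge y_{s-1}<y_s$, and conclude consistency from $\pi(z_ny)=(\pi(z_n),\pi(y))$. The only difference is that you verify in detail what the paper simply asserts (that $z_ny\in Z$ and that $(z_n,y)$ is indeed its standard factorization), which is a harmless and welcome elaboration.
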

\begin{proof}
Set $y=y_1\cdots y_s$ with $y_i\in S_k(A)\cap Z$ and $y_1\ge\ldots\ge y_{s-1}<y_s$. By the hypothesis,
$y_1=z_m$ with $m<n$ and thus $y_1\prec z_n$. Thus
$z_n\ge y_1\ge\ldots\ge y_{s-1}<y_s$, which shows that 
$xy$ is in $D_{k+1}(A)$. It is  consistently
in $D_{k+1}(A)$ since $\pi(z_ny)=(\pi(x),\pi(y))$.
\end{proof}
The next lemma describes a situation where the two structures
are distinct.
\begin{lemma}\label{lemma3}
For $k\ge 0$ and $n\ge 1$, if $x\in S_k(A)$ and $y\in D_m(A)$
with $|y|$ even and $m\le k$, then $xy\in S_k(A)$.
\end{lemma}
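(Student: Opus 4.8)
The plan is to argue by induction on $k-m\ge 0$. Note first that $D_m(A)$ is defined only for $m\ge 1$, so one may assume $k\ge m\ge 1$ (when $k=0$ the hypothesis is empty). Since every word of $S_{m-1}(A)$ has odd length over $A$, the length over $A$ of any word over the alphabet $S_{m-1}(A)$ has the same parity as its length over $S_{m-1}(A)$, so ``$|y|$ even'' may be read with respect to either alphabet, and similarly at each intermediate level. Throughout I will use the elementary fact that, over any alphabet $B$, a concatenation of dips $d_1d_2\cdots d_r$ is its own greedy $D(B)$-factorization: a dip has no strict ascent before its last position, so no nonempty proper prefix of a dip is a dip, hence the maximal prefix code $D(B)$ cuts exactly after each $d_i$ and leaves no remainder.

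For the base case $k=m$, put $B=S_{m-1}(A)$, so $x\in S_m(A)=S(B)$ and $y\in D_m(A)=D(B)$. By the definition of $S(B)$, the dip factorization of $x$ over $B$ is $x=d_1d_2\cdots d_r$ with $d_1$ of odd length and $d_2,\ldots,d_r$ of even length, while $y$ is a single dip of even length over $B$. By the fact just noted, the dip factorization of $xy$ over $B$ is $d_1,\ldots,d_r,y$, an odd dip followed by even dips, so $xy\in S(B)=S_k(A)$.

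For the inductive step $k>m$, the induction hypothesis holds with $k-1$ in place of $k$ (one still has $m\le k-1$). Put $B'=S_{k-1}(A)$, so $x\in S_k(A)=S(B')$, and the dip factorization of $x$ over $B'$ is $x=d_1\cdots d_r$ with $d_1$ odd and $d_2,\ldots,d_r$ even. Let $u\in B'=S_{k-1}(A)$ be the last $B'$-letter of $x$, namely the last letter of $d_r$; since $d_r$ is a dip over $B'$ it has $B'$-length $p\ge 2$, so we may write $d_r=c_1\cdots c_{p-1}u$ with $c_1\ge\cdots\ge c_{p-1}<u$ in the radix order restricted to $B'$. The crucial point is that, by the induction hypothesis applied to $u$ and $y$, the word $uy$ lies in $S_{k-1}(A)=B'$; that is, $uy$ is again a single letter of $B'$. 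Hence over $B'$ the word $xy$ is obtained from $x=d_1\cdots d_{r-1}c_1\cdots c_{p-1}u$ by replacing its last letter $u$ with the single letter $uy$, so that $xy=d_1\cdots d_{r-1}d_r'$ with $d_r'=c_1\cdots c_{p-1}(uy)$. Since $u$ is a proper prefix of $uy$ one has $u<uy$ in the radix order, whence $c_{p-1}<u<uy$, and therefore $d_r'$ is again a dip over $B'$, of length $p$ and hence of the same parity as $d_r$. By the fact just noted once more, the dip factorization of $xy$ over $B'$ is $d_1,\ldots,d_{r-1},d_r'$, an odd dip followed by even dips (a single odd dip when $r=1$). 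Therefore $xy\in S(B')=S_k(A)$, completing the induction.

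I expect the one genuine subtlety to be in the inductive step: recognizing that the whole concatenation $xy$ is governed by a purely local modification of the last dip of $x$ over $S_{k-1}(A)$ (which is precisely what the induction hypothesis provides), and checking that this modification respects the radix order, that is, that $c_{p-1}<u<uy$. This last inequality relies on the order on each $S_j(A)$ being the restriction of the radix order on $A^*$, together with the fact that every word is radix-larger than each of its proper prefixes. The base case, and the parity bookkeeping between consecutive alphabets via the odd-degree lemma, are routine.
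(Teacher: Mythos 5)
Your proof is correct and follows essentially the same route as the paper: induction on $k-m$, with the base case $m=k$ immediate from the definition of $S_k(A)$, and the inductive step absorbing $y$ into the last $S_{k-1}(A)$-letter of the last dip of $x$ (the paper's $v_ty\in S_{k-1}(A)$ is your $uy$), using $u<uy$ in the radix order and parity of $|y|$ to preserve the dip and superdip structure. The extra remarks on greedy factorizations and parity transfer between alphabets only make explicit what the paper leaves implicit.
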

\begin{proof}
We prove the statement by induction on $k-m$. It is true if $m=k$
by definition of $S_k(A)$. Assume $m<k$ and set $x=u_1\cdots u_s$
with $u_i\in D_k(A)$.
Set $u_s=v_1\cdots v_t$ with $v_i\in S_{k-1}(A)$ and 
$v_1\ge\ldots\ge v_{t-1}<v_t$. We have $v_ty\in S_{k-1}(A)$
by induction hypothesis. Since $v_t<v_ty$ in the radix order, this implies
$u_sy=v_1\cdots v_{t-1}(v_ty)\in D_k(A)$. 
Since $|y|$ is even, the lengths of $u_s$ and $u_sy$
have the same parity. This implies that $xy=u_1\cdots u_{s-1}(u_sy)$
is in $S_k(A)$.
\end{proof}

\begin{example}
Set $x=aadaabaac$ and $y=ab$ as in Example~\ref{exampleNotConsistent}.
Then, as we have seen, $xy\in D_2(A)$ but the decomposition
of $xy$ in $1$-dips is $(aad,aab,aacab)$ although
$\pi(xy)=((\pi(aad),(\pi(aab),\pi(aac))),\pi(ab))$.
\end{example}

Let $\Delta=\cup_{n\ge 1}D_n(A)$ and $\Sigma=\cup_{n\ge 0}S_n(A)$.
\begin{proposition}\label{propositionInduction}
The words of odd length in $Z$ are in $\Sigma$.
\end{proposition}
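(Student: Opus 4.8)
The plan is to prove the statement by induction on the length of the word $z \in Z$ of odd length, following the inductive structure already set up in the paper. Let $z \in Z$ have odd length. If $z$ is a letter, then $z \in A = S_0(A) \subseteq \Sigma$ and we are done. Otherwise, consider the standard factorization $z = z_n y$ of $z$, where $(z_n, y)$ is determined by the least $n$ with $z \in Z_{n+1}$, so that $z_n \prec z$ and $\pi(z) = (\pi(z_n), \pi(y))$. I would first argue that $z_n$ has odd length: indeed, if $z_n$ had even length, then by clause (i) of the definition of $\prec$ every odd-length word precedes $z_n$, and one checks that this forces $z$ itself to be of even length (since $z = z_n y$ and, after peeling off the even-length blocks, the parities must match up), a contradiction. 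Hence $|z_n|$ is odd, and by the induction hypothesis $z_n \in \Sigma$, say $z_n \in S_k(A)$ for some $k \ge 0$; we may take $k$ maximal, so that in fact $\Index(z_n) = k$.

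\textbf{The dichotomy on $y$.} The key case analysis is on the parity of $|y|$. Since $|z| = |z_n| + |y|$ is odd and $|z_n|$ is odd, $|y|$ must be even. Now write $y = y_1 \cdots y_s$ as a product of elements of $Z_n$ (from the definition $y \in z_n^*(Z_n \setminus z_n)$, so $y = z_n^{p} y'$ with $y' \in Z_n \setminus z_n$; more carefully one uses the dip structure of $y$ inside $Z_{n+1}$). The goal is to show $z = z_n y \in S_k(A)$. This is precisely where Lemma~\ref{lemma3} does the work: if I can show that $y$ (or an appropriate factorization of it) decomposes as a product of even-length $m$-dips with $m \le k$ on the relevant alphabet, then repeated application of Lemma~\ref{lemma3} with $x = z_n \in S_k(A)$ yields $z_n y \in S_k(A) \subseteq \Sigma$. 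Conversely, if $y$ were consistently in $D_{k+1}(A)$ with $|y|$ odd we would be in the situation of Lemma~\ref{lemma2}, but that case is excluded here since $|y|$ is even.

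\textbf{The main obstacle.} The hard part will be to establish that the block structure of $y$ coming from its membership in $Z_n$ (equivalently, its standard factorization tree) is compatible with a decomposition into even-length dips of level $\le k$, so that Lemma~\ref{lemma3} applies. Example~\ref{exampleNotConsistent} warns us that the Lazard-tree structure and the dip structure need not coincide in general, so I cannot simply read off the dip decomposition from $\pi(y)$; I will instead need to track how the order $\prec$ — in particular the condition $z_n \ge y_1$ forced by $y_1 = z_m$ with $m < n$ and the radix comparison within a fixed index class — guarantees that each block $y_i$ has length of the correct parity and index bounded by $k$. This likely requires an auxiliary induction (or a companion statement proved simultaneously) asserting that every element of $Z_{n+1}$ of even length lies in $\Delta$ at the appropriate level, mirroring Proposition~\ref{propositionInduction}; the two statements about $Z$ (odd length $\Rightarrow$ in $\Sigma$, even length $\Rightarrow$ in the relevant $D_m(A)$) would then be proved together by a single induction on word length, using Lemmas~\ref{lemma2} and~\ref{lemma3} as the inductive engine and the parity/index bookkeeping of $\prec$ to route each word into the correct case.
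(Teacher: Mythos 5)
Your plan correctly identifies Lemmas~\ref{lemma2} and~\ref{lemma3} as the engine and correctly senses that a companion statement about even-length words is needed, but it stops exactly at the point where the actual work lies, and it sets up the induction in a way that does not give you the facts you need. You propose induction on $|z|$ via the standard factorization $z=z_ny$: the induction hypothesis then only tells you that the shorter \emph{odd-length} words of $Z$ lie in $\Sigma$, i.e.\ $z_n\in S_k(A)$ for \emph{some} $k$, and even a companion statement ``even-length elements of $Z$ lie in some $D_m(A)$'' would only give $y\in D_m(A)$ for \emph{some} $m$. But Lemma~\ref{lemma3} needs $m\le k$, and nothing in a length-based induction ties the dip level of $y$ to that of $z_n$ --- the levels are not monotone in length. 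The paper resolves precisely this by inducting not on word length but on the stage $n$ of the Lazard elimination, with the invariant that \emph{all} words of $Z_n$ lie in a single class $\P_k$ for one common $k$ (odd length: in $S_k(A)$ or consistently in $D_{k+1}(A)$; even length: consistently in $D_{k+1}(A)$ or in $D_m(A)$ with $m\le k$). The uniformity of $k$ across the whole prefix code $Z_n$, maintained stage by stage according to whether the eliminated word $z_{n-1}$ is a superdip or a dip, is the missing idea; it is what makes the hypotheses of Lemmas~\ref{lemma2} and~\ref{lemma3} available when you form $z_{n-1}y$. Moreover, the notion ``consistently in $D_{k+1}(A)$'' (compatibility of $\pi(y)$ with the dip decomposition) is not an optional refinement you can postpone: Example~\ref{exampleNotConsistent} shows the two structures can disagree, so the naive companion statement you suggest is not the right invariant --- this is exactly the ``main obstacle'' you name and leave unresolved.

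There is also a concrete misstep in your case analysis. With $|z_n|$ odd and $|y|$ even, you declare the goal to be $z_ny\in S_k(A)$ and dismiss the Lemma~\ref{lemma2} route ``since $|y|$ is even.'' But the case where $y$ has even length and is consistently in $D_{k+1}(A)$ does occur; there $z_ny$ is again consistently in $D_{k+1}(A)$ (this is how the paper uses Lemma~\ref{lemma2}, case 1.2.1 of its proof), and being of odd length it lands in $S_{k+1}(A)$, not $S_k(A)$. The conclusion $z\in\Sigma$ still holds, but your dichotomy ``even $y$ $\Rightarrow$ decompose into low-level even dips and apply Lemma~\ref{lemma3}'' is incomplete, and the missing branch is the one that forces the ``consistently in $D_{k+1}(A)$'' bookkeeping into the invariant. (A minor point in your favour: your claim that $z_n$ has odd length is correct, but the clean reason is simply that $z_n\prec z$ and odd-length words precede even-length ones under $\prec$.)
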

\begin{proof}
For a given integer $k\ge 0$, we define
a set  $\P_k$ which is formed of the words $y\in Z$
such that the following holds.
\begin{enumerate}
\item[(i)]  If $|y|$ is odd, then $y$ is
 in $S_k(A)$ or 
consistently in $D_{k+1}(A)$.
\item[(ii)]  If $|y|$ is even,  then $y$ is consistently in $D_{k+1}(A)$
or in $D_m(A)$
with $m\le k$. 
\end{enumerate}
 We prove by induction on $n\ge 1$ that if $n=1$
or if
$z_{n-1}$ has odd length,
one has $Z_{n}\subset \Delta\cup \Sigma$,
and more precisely that, for some integer $k\ge 0$, 
all words of 
$Z_{n}$ are in ${\cal P}_k$.

This is true for $n=1$ with $k=0$. Indeed, 
 $Z_1=A=S_0(A)$. Thus $Z_1\subset \P_0$.

Assume now that $n\ge 2$ and that $z_{n-1}$ has odd length.
Then  by the induction hypothesis
 $Z_{n-1}\subset\P_{k-1}$ for some $k\ge 1$.
Since $z_{n-1}$ has odd length it is either in $S_{k-1}(A)$ or in $D_{k}(A)$.

\paragraph{Case 1}
Assume first that $z_{n-1}\in S_{k-1}(A)$. 
We show that in this case  $Z_{n}\subset \P_{k-1}$. 
We have to prove that for $y\in Z_n\cap \P_{k-1}$, one has $z_{n-1}y\in \P_{k-1}$. 
\begin{enumerate}
\item[1.1 ]Suppose  that
$y$ has odd length. Then  $y$ is in $S_{k-1}(A)$ or consistently in $D_{k}(A)$.
\begin{enumerate}
\item[1.1.1]
If $y\in S_{k-1}(A)$,
then  $z_{n-1}y$
is consistently in $D_{k}(A)$ and thus $z_{n-1}y\in \P_{k-1}$
\item[1.1.2]
If $y\in D_k(A)$, then  $z_{n-1}y$ is consistently in $D_{k}(A)$ by Lemma~\ref{lemma2} and thus $z_{n-1}y\in\P_{k-1}$ .
\end{enumerate}
\item[1.2]
Suppose now that $y$ has even length. Then $y$ is either consistently
in $D_k(A)$ or in $D_m(A)$ for $m<k-1$.
\begin{enumerate}
\item[1.2.1]If $y$
is consistently in $D_{k}(A)$, then $z_{n-1}y$ is consistently in $D_{k}(A)$ 
by Lemma~\ref{lemma2}. 
\item[1.2.2] Otherwise, it is in $D_m(A)$ for $m<k-1$
and thus $z_{n-1}y$ is in $S_{k-1}(A)$ by Lemma~\ref{lemma3}.
\end{enumerate}
In both cases, $z_{n-1}y\in\P_{k-1}$.
\end{enumerate}
\paragraph{Case 2}
Assume now that $z_{n-1}$ is in $D_k(A)$. We show that in this case
$Z_n\subset \P_k$. Since $z_{n-1}\prec z$ for every $z\in Z_{n-1}\setminus z_{n-1}$,
the words of odd length in $Z_{n-1}$ cannot be in $S_{k-1}(A)$. Thus
$Z_{n-1}\subset \P_k$ and we have to prove that for any $y\in Z_n\cap \P_k$,
we have $z_{n-1}y\in \P_k$. The proof is the same as in Case 1 with $k$
instead of $k-1$.
\end{proof}
Using Proposition~\ref{propositionInduction}, we now prove the following result.
\begin{theorem}\label{theoremMain}
 For any odd integer $m$, the set of words
of $\Sigma$ of length $m$ is equal to the set of words of length $m$
in $Z$.
\end{theorem}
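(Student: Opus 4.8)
The plan is to deduce the statement from a cardinality argument, once we know how many words of each length a Lazard set contains. \emph{Step 1: the count $\Card(Z\cap A^m)=\ell_m(k)$, where $k=\Card(A)$.} Iterating the defining relation $Z_{i+1}=z_i^*(Z_i\setminus z_i)$ gives $Z_i^*=Z_{i+1}^*z_i^*$, and hence, letting the bound on lengths tend to infinity, the complete factorization $A^*=\cdots z_3^*z_2^*z_1^*$: every word of $A^*$ has a unique factorization $z_{i_1}^{n_1}\cdots z_{i_r}^{n_r}$ with $z_{i_1}\succ\cdots\succ z_{i_r}$ and $n_j\ge 1$ (this is the link between Lazard sets and Lazard elimination; see~\cite{Viennot1978,Reutenauer1993}). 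Passing to the generating series of lengths, this factorization yields $\frac1{1-kt}=\prod_{m\ge 1}(1-t^m)^{-c_m}$ with $c_m=\Card(Z\cap A^m)$; comparing the coefficients of $t^n$ in the logarithms of both sides gives $\sum_{d\mid m}d\,c_d=k^m$ for every $m$. Since $\ell_m(k)$ satisfies the same relation $\sum_{d\mid m}d\,\ell_d(k)=k^m$ (every word of length $m$ is a unique power of a primitive word), one gets $c_m=\ell_m(k)$. Alternatively, one may invoke directly the classical fact that a Hall set of words contains exactly one representative of each conjugacy class of primitive words.

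\emph{Step 2: conclusion.} If $m$ is even there is nothing to prove, since every word of $\Sigma$ has odd length. Let $m$ be odd. By Proposition~\ref{propositionInduction}, $Z\cap A^m\subseteq\Sigma\cap A^m$. By Proposition~\ref{propositionEastman}, $\Sigma\cap A^m$ is a comma-free code meeting every conjugacy class of primitive words of length $m$; as a comma-free code contains at most one word of each such class, $\Card(\Sigma\cap A^m)=\ell_m(k)$. Together with $\Card(Z\cap A^m)=\ell_m(k)$ from Step~1, the inclusion $Z\cap A^m\subseteq\Sigma\cap A^m$ is an inclusion of finite sets of equal cardinality, hence an equality, which is the assertion.

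The one point lying outside what the earlier sections already provide is the count in Step~1, so this is the step I would treat most carefully; it is nevertheless a standard property of Lazard (equivalently Hall) sets and can be quoted from the literature, which keeps the argument short. Everything else is a direct combination of Propositions~\ref{propositionInduction} and~\ref{propositionEastman}.
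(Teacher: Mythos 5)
Your proposal is correct and follows essentially the same route as the paper: the inclusion $Z\cap A^m\subseteq\Sigma\cap A^m$ from Proposition~\ref{propositionInduction}, the fact that a Lazard set yields a complete factorization of $A^*$ and hence one representative per primitive conjugacy class (your Step~1, which the paper simply quotes from the literature rather than re-deriving via generating series), and comma-freeness of $\Sigma\cap A^m$ from Proposition~\ref{propositionEastman} to force equality. The only difference is cosmetic: you phrase the last step as a cardinality comparison, while the paper phrases it as the representative property plus the at-most-one-per-class property of comma-free codes.
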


\begin{proof}
 We have shown that $Z\cap A^m$ is contained in $\Sigma\cap A^m$.
Since  $Z$ is a Lazard set,
by \cite[Proposition 8.1.10]{BerstelPerrinReutenauer2010},
the family $\{z\mid z\in Z\}$ is a complete factorization of $A^*$.
Thus, by \cite[Corollary 8.1.7]{BerstelPerrinReutenauer2010}, it is a set of representatives of the
primitive conjugacy classes. Since $\Sigma\cap A^m$ is 
comma-free by Proposition~\ref{propositionEastman}, this forces $Z\cap A^m=\Sigma\cap A^m$.
\end{proof}
\section{The Melan\c{c}on algorithm}\label{sectionMelancon}
The last part, taken from~\cite{Reutenauer1993},
 describes an algorithm due to Melan\c{c}on
to find the conjugate of a primitive word which belongs to a Lazard set.
We include it here because it gives an algorithm to
compute the conjugate of a word of even length which belongs to the comma-free
code obtained by Scholtz algorithm.

Let $Z$ be a Lazard set. Consider the following algorithm starting with a 
primitive word
$w=a_1a_2\cdots a_m$ and operating on a sequence $s=(s_1,\ldots,s_n)$
of $n$ elements of $Z$, not all equal. Initially, $s=(a_1,a_2,\ldots,a_m)$.
The main step transformes $s$ as follows. Since not all $s_i$ are equal,
there is an index $i$ with $1\le i\le n$ such that $s_i<s_{i+1}$
(taking the indices cyclically) with $s_i$ minimal among $s_1,\ldots,s_n$.
If $i<n$, change $s$ into $(s_1,\ldots,s_{i-1},s_is_{i+1},s_{i+2},\ldots,s_n)$.
If $i=n$, change $s$ into $(s_ns_1,s_2,\ldots,s_{n-1})$.
The algorithm stops when $n=1$.
\begin{example}
We use the algorithm to find the conjugate of $abracadabra$
which is in the code $S_1$, using the order $\prec$.

 A first sequence of iterations transforms $s=(a,b,r,a,c,a,d,a,b,r,a)$
into $s=(ab,r,ac,ad,ab,r,a)$. At this step, the minimum is the last one
and we obtain $s=(aab,r,ac,ad,ab,r)$. The minimum is now $r$ and thus
we obtain in two steps
$s=(raab,rac,ad,ab)$. The minimum is now $rac$ and we obtain in two steps
$s=(raab,racadab)$. The last one being the minimum, we finally
obtain $s=(racadabraab)$.
\end{example}
 We claim that the result of the algorithm is the conjugate of $w$ which is
in $Z$.

 Let $Z\cap A^{[m]}=\{z_1,z_2,\ldots,z_k\}$
and let $Z_i$ be the sequence defined, as in the definition of Lazard sets,
 by $Z_1=A$ and $Z_{i+1}=z_i^*(Z_i\setminus z_i)$. For $z\in Z$, we denote $\nu(z)=\min\{i\ge 1\mid z\in Z_i\}-1$
and $\delta(z)=\max\{i\ge 1\mid z\in Z_i\}$. Note that
$\delta(z_i)=i$ and that $y<z$ if and only if $\delta(y)<\delta(z)$.
 Then, as for Hall sequences,
for $y,z\in Z$, one
has $yz\in Z$ if and only if $\nu(z)\le\delta(y)<\delta(z)$.
Moreover, in this case, $\nu(yz)=\delta(y)$.

Let us show that any sequence $s=(s_1,\ldots,s_n)$ obtained during
the algorithm is such that all $s_i$ are in $Z$ and for any
$s_i$, either $s_i\in A$ or $\nu(s_i)\le \delta(s_1),\ldots,\delta(s_n)$.
This is true for the initial value of $s$. Next, if we assume that
$s$ has this property and is not constant, let $i$ be such that
$s_i<s_{i+1}$ and $s_i=\min\{s_1,\ldots,s_n\}$. Then,
since $\nu(s_{i+1})\le\delta(s_i)<\delta(s_{i+1})$, we have $s_is_{i+1}\in Z$
and $\nu(s_is_{i+1})=\delta(s_i)$.
Since $\delta(s_i)=\min\{\delta(s_1),\ldots,\delta(s_n)\}$, we conlude
that $\nu(s_is_{i+1})\le\delta(s_1),\ldots,\delta(s_n)$. For the
other $s_j$, we have $\nu(s_j)\le\delta(s_i)$ by induction hypothesis
and thus $\nu(s_j)<\delta(s_is_{i+1})$.
When the algorithm stops, we obtain a word in $Z$.

\bibliographystyle{plain}
\bibliography{lazard}

\end{document}